\numberwithin{equation}{section}
\newtheorem{prop}{Proposition}[section]
\newtheorem{ddd}[prop]{Definition}
\newcommand{\End}{\mathop{\mbox{\rm End}}}
\newcommand{\Ind}{{\rm ind}}
\newcommand{\dom}{\mathop{\rm dom}}
\newcommand{\M}{{\mathcal M}}
\newcommand{\Fi}{{\mathfrak F}{\mathfrak i}{\mathfrak n}}
\newcommand{\st}{\star_{\alpha}}
\newcommand{\Ki}{{\rm K}}
\newcommand{\KKi}{{\rm KK}}
\newcommand{\Ft}{{\mathcal F}}
\newcommand{\dira}{\partial\!\!\!/}
\newcommand{\gl}{{\mathfrak g}}
\newcommand{\incl}{\hookrightarrow}
\newcommand{\Tr}{{\rm Tr}}
\newcommand{\ev}{{\rm ev}}
\newcommand{\Alpha}{\mathfrak A}
\newcommand{\N}{{\mathcal N}}
\newcommand{\D}{{\mathcal D}}
\newcommand{\U}{{\mathcal U}}
\newcommand{\R}{{\mathcal R}}
\newcommand{\A}{{\mathcal A}}
\newcommand{\B}{{\mathcal B}}
\newcommand{\Cg}{{\mathcal C}}
\newcommand{\Ai}{{\mathcal A}_{\infty}}
\newcommand{\C}{C^{\infty}}
\newcommand{\K}{{\mathcal K}}
\newcommand{\lr}{\longrightarrow}
\newcommand{\ten}{\otimes}
\newcommand{\ve}{\varepsilon}
\newcommand{\ov}{\overline}
\newcommand{\Dir}{D \!\!\!\!/\,}
\DeclareMathOperator{\spfl}{sf}
\DeclareMathOperator{\Ran}{Ran}
\DeclareMathOperator{\Ker}{Ker}
\def\bbbr{{\rm I\!R}} 
\def\bbbn{{\rm I\!N}} 
\def\bbbc{{\rm I\!C}}
\def\bbbq{{\mathchoice {\setbox0=\hbox{$\displaystyle\rm Q$}\hbox{\raise
0.15\ht0\hbox to0pt{\kern0.4\wd0\vrule height0.8\ht0\hss}\box0}}
{\setbox0=\hbox{$\textstyle\rm Q$}\hbox{\raise
0.15\ht0\hbox to0pt{\kern0.4\wd0\vrule height0.8\ht0\hss}\box0}}
{\setbox0=\hbox{$\scriptstyle\rm Q$}\hbox{\raise
0.15\ht0\hbox to0pt{\kern0.4\wd0\vrule height0.7\ht0\hss}\box0}}
{\setbox0=\hbox{$\scriptscriptstyle\rm Q$}\hbox{\raise
0.15\ht0\hbox to0pt{\kern0.4\wd0\vrule height0.7\ht0\hss}\box0}}}}
\def\bbbz{{\mathchoice {\hbox{$\sf\textstyle Z\kern-0.4em Z$}}
{\hbox{$\sf\textstyle Z\kern-0.4em Z$}}
{\hbox{$\sf\scriptstyle Z\kern-0.3em Z$}}
{\hbox{$\sf\scriptscriptstyle Z\kern-0.2em Z$}}}}
\def\bbbc{{\mathchoice {\setbox0=\hbox{$\displaystyle\rm C$}\hbox{\hbox
to0pt{\kern0.4\wd0\vrule height0.9\ht0\hss}\box0}}
{\setbox0=\hbox{$\textstyle\rm C$}\hbox{\hbox
to0pt{\kern0.4\wd0\vrule height0.9\ht0\hss}\box0}}
{\setbox0=\hbox{$\scriptstyle\rm C$}\hbox{\hbox
to0pt{\kern0.4\wd0\vrule height0.9\ht0\hss}\box0}}
{\setbox0=\hbox{$\scriptscriptstyle\rm C$}\hbox{\hbox
to0pt{\kern0.4\wd0\vrule height0.9\ht0\hss}\box0}}}}
\title{Index theory for actions of compact Lie groups on $C^*$-algebras}
\author{Charlotte Wahl}
\subjclass[2000]{58J22;46L55;19K53}
\keywords{$C^*$-dynamical system, index theory, principal, saturated, KK-theory, spectral triple}
\begin{document}
\begin{abstract}
We study the index theory for actions of compact Lie groups on $C^*$-algebras with an emphasis on principal actions. Given an invariant semifinite faithful trace on the $C^*$-algebra we obtain semifinite spectral triples. For circle actions we consider the relation to the dual Pimsner-Voiculescu sequence. On the way we show that the notions ``saturated'' and ``principal'' are equivalent for actions by compact Lie groups. 
\end{abstract}

\maketitle

\section{Introduction}

$C^*$-algebras can be considered as a noncommutative generalization of locally compact spaces. Let $G$ be a compact group.
A $G$-$C^*$-algebra is a $C^*$-algebra $\A$ endowed with an action $\alpha$ of $G$, such that the homomorphism $\A \to C(G,\A),~a \mapsto \alpha(a)$ is continuous. $G$-$C^*$-algebras generalize actions of a compact groups on locally compact spaces. The fixed point algebra $\A^{\alpha}$ plays the role of the quotient. The notion of a saturated action was introduced in \cite{ph} as a noncommutative version of a free action. If $\alpha$ is saturated, then $\A^{\alpha}$ and the crossed product $\A \times_{\alpha}G$ are strongly Morita equivalent. 
More recently principal actions for quantum groups on $C^*$-algebras were defined \cite{e}. These generalize principal bundles. We will show that for compact Lie groups the notions saturated and principal agree. We refer to \cite{bhms} for a detailed account on the correspondence between topological and $C^*$-algebraic notions in this context. 

Index theorical concepts for actions of Lie groups have been studied at several places in the literature: A general index theory was outlined in \cite{co}. For ergodic actions a spectral triple has been defined in \cite[\S 3]{ri}. For the torus gauge action on (higher) graph $C^*$-algebras a semifinite spectral triple has been constructed and an index theorem has been proven recently in \cite{pre}\cite{prs}.  The aim of this paper is to study the index theory for actions of compact Lie groups in a systematic way and in the case of principal actions to elaborate the analogy with the index theory of vertical elliptic operators on a principal bundle. Our results provide the missing link between the existing constructions and also a connection to the index theory for pseudodifferential operators over $C^*$-algebras in general. Furthermore we obtain a new class of examples of semifinite spectral triples.

We describe now our constructions and their relation to those mentioned before in more detail:

In \cite{co} a pseudodifferential calculus for an $\bbbr^n$-action on a $C^*$-algebra was defined in a very concrete way, an Atiyah-Singer type exact sequence was constructed and an index theorem was proven. It was remarked that a similar theory exists for general locally compact Lie groups. Our approach here is different: We will show that for compact Lie groups the algebra of invariant pseudodifferential operators on $\C(G,\A)$ yields such an Atiyah-Singer type exact sequence (\S \ref{atsinseq}). As in \cite{co} the role of the algebra of compact operators is played by $\A \times_{\alpha} G$ and hence the index is an element in $\Ki_*(\A \times_{\alpha} G)$. An index theorem for the pairing of the index with an invariant trace on $\A$ follows from the index theorem proven in \cite{wa}. 

We will also define and study appropriate analogues of Sobolev spaces, which will be Hilbert $\A^{\alpha}$-modules. If $\A=C(X)$ for a compact space $X$, then these correspond to vertical Sobolev spaces on the fibration $X \to X/G$. Our calculations will show, as has been noted many times before in related situations, that the crossed product is in general better behaved than the quotient from an index theoretic point of view. For saturated actions the pseudodifferential calculus considered here is analogous to the pseudodifferential calculus of vertical invariant classical pseudodifferential operators on a principal bundle (\S \ref{satact}). 

We will particularly focus on Dirac operators: An invariant Dirac operator $\dira$ on $G$ induces an elliptic element $\Dir$ in our calculus and a class $[\Dir]$ in $\KKi_*(\A,\A \times_{\alpha} G)$ (\S \ref{indDirtheor}). We prove an index theorem for the pairing of $\Ki_*(\A)$ with this class. If $\A$ is endowed with a suitable invariant trace, then we associate to any invariant Dirac operator on the group a semifinite spectral triple for a matrix algebra over the enveloping von Neumann algebra of the crossed product (\S \ref{spectrip}). 

In \cite{pre}\cite{prs} the authors associate to any invariant Dirac operator on a $k$-torus $T^k$ a selfadjoint operator with compact resolvents on a Hilbert $\A^{\alpha}$-module, where $\A$ is a $k$-graph algebra and $\alpha$ the gauge action. They show that their construction yields a class in $\KKi_*(\A,\A^{\alpha})$ and -- in presence of an invariant semifinite trace -- define a semifinite spectral triple to which they apply the semifinite local index formula \cite{cpr1}\cite{cpr2}. For ergodic actions of compact Lie groups on general $C^*$-algebras an analogous construction has been indicated before in \cite{ri} yielding a (ordinary) spectral triple. We generalize the construction to saturated actions and show that the $\KKi$-theory class it is related to the class $[\Dir]$ from above via Morita equivalence (\S \ref{satact}). The geometric interpretation of the index theorem in this situation is that of a family index theorem for the invariant Dirac operator on a principal bundle with fiber $G$ and noncommutative base and total space. The von Neumann algebra defined in \cite{prs} was suspected by the authors to be isomorphic to the (von Neumann algebraic) crossed product of the $k$-graph $C^*$-algebra with $T^k$. Our approach shows that this is indeed the case if the action is saturated.

One motivation for index theories lies in the construction and calculation of invariants. Invariants of graph $C^*$-algebras have recently found interest in connection with Mumford curves, see \cite[\S 1.17]{clm}. The pairing of $\Ki_*(\A)$ with $[\Dir]$ yields invariants of the action $\alpha$. If $G=T$, then these are well-known: We will identify two of the six maps of the dual Pimsner-Voiculescu sequence with the Kasparov product with $[\Dir]$ (\S \ref{pimsvoi}). For graph $C^*$-algebras, for example, the dual Pimsner-Voiculescu sequence is well-understood and a useful tool in the study of their $\Ki$-theory \cite[Ch. 7]{r}.

The most popular example of a saturated action is probably the $T^2$-action on the noncommutative two-torus. Other examples arise from the $T$-action on graph $C^*$-algebras and from $C^*$-algebraic quantum principal bundles (\S \ref{examples}). Though in the latter case the fiber is a Hopf-algebra in general, the structure group of the various quantum Hopf bundles is the circle.

\section{Hilbert $C^*$-modules and $\KKi$-theory}

In this section we review some notions related to Hilbert $C^*$-modules and $\KKi$-theory needed in the sequel. We refer to \cite{bl} for more details.

Let $\A$ be a $C^*$-algebra with norm $\|~\|$.

An $\A$-valued scalar product on a right $\A$-module $H$ is a pairing $\langle~,~\rangle: H\times H \to \A$ that is linear in the second variable and such that for $v,w \in H$ and $a \in \A$
\begin{itemize}
\item $\langle v,wa\rangle=\langle v,w\rangle a$,
\item $\langle v,w\rangle=\langle w,v\rangle ^*$,
\item $\langle v,v\rangle\ge 0$,
\item $\langle v,v \rangle=0$ only if $v=0$.
\end{itemize}

The induced norm on $H$ is given by $\|v\|_H^2=\|\langle v,v \rangle \|$.

A Hilbert $\A$-module is a right $\A$-module $H$ endowed with an $\A$-valued scalar product such that $H$ is complete with respect to the induced norm. 

An elementary example of a Hilbert $\A$-module, which will however play a role in the following, is the following: Let $E$ be a finite dimensional complex vector space with hermitian product $\langle~,~\rangle_E$. Then $E \ten \A$ endowed with the $\A$-valued scalar product 
$$\langle v \ten a,w \ten b\rangle= \langle v,w\rangle_E a^*b$$
is a Hilbert $\A$-module.

The $C^*$-algebra of bounded adjointable ($\A$-linear) operators on a Hilbert $\A$-module $H$ is denoted  by $B(H)$. For $v,w \in H$ we define $B(H) \ni \Theta_{v,w}: x\mapsto v \langle w,x\rangle$. The ideal of compact operators in $B(H)$ is per definition the sub-$C^*$-algebra of $B(H)$ generated by the operators $\Theta_{v,w},~v,w \in H$. An operator $F \in B(H)$ is called Fredholm if $F$ is invertible in $B(H)/K(H)$. If $H$ is countably generated, then there is a well-defined index $\Ind(F) \in K_0(\A)$ of a Fredholm operator $F$. 

In an elementary way an ideal a closed ideal $I$ in a $C^*$-algebra $\B$ defines a ``generalized index theory'':  A Fredholm operator is an element $F \in \B$ whose class is invertible in $\B/I$ and the index is the image of the class $[F] \in \Ki_1(\B/I) \to \Ki_0(I)$. The index theory constructed in \S \ref{indseq} will be such a generalized index theory. 

An unbounded densely defined operator $D$ on $H$ is called regular if $(1+D^*D)^{-1}$ has dense range. If $D$ is selfadjoint and regular, then $f(D) \in B(H)$ is well-defined for $f \in C(\bbbr)$.

We assume now that $\A,\B$ are $\sigma$-unital $C^*$-algebras. 

\begin{ddd}
An odd unbounded Kasparov $(\A,\B)$-module is a triple $(\rho,D,H)$, where $H$ is a countably generated Hilbert $\B$-module, $\rho:\A \to B(H)$ is a $C^*$-homomorphism and $D$ is a selfadjoint regular operator on $H$, is called an unbounded Kasparov $(\A,\B)$-module if for all $a \in \A$
$$\rho(a)(1+D^2)^{-1} \in K(H)$$ 
and such that there is a dense subset $A \subset \A$ with $$[D, \rho(a)]$$ densely defined and bounded for all $a \in \A$. 

An even unbounded Kasparov $(\A,\B)$-module is a triple $(\rho,D,H)$ as above such that in addition $H$ is $\bbbz/2$-graded, $D$ is odd and $\rho(\A)$ even. 
\end{ddd}

An even resp. odd unbounded Kasparov $(\A,\B)$-module $(\rho,D,H)$ defines a class $[(\rho,D,H)]$ in $\KKi_0(\A,\B)$ resp. $\KKi_1(\A,\B)$. Usually we will suppress $\rho$ and $H$ from the notation.
 
There is a pairing $$\ten_{\A}:\Ki_i(\A) \times \KKi_j(\A,\B) \to \Ki_{i+j}(\B) \ ,$$ where $i,j \in \bbbz/2$. 

We will only consider the case $i=j$. Then the pairing generalizes the following index theoretic pairings by using $\Ki_0(\bbbc)\cong \bbbz$:

Let $P$ be a symmetric elliptic pseudodifferential operator of order one acting on the sections of a hermitian vector bundle $E$ on a closed Riemannian manifold $M$. Then $(m,P,L^2(M,E))$ is an odd unbounded Kasparov $(C(M),\bbbc)$-module, where $m(f)$ is the multiplication operator associated to $f \in C(M)$. In the following $m$ is suppressed in the notation. 

For a unitary $u \in \C(M,M_n(\bbbc))$ 
$$[u]\ten_{C(M)} [P]= \spfl(((1-t)P+ t u P u^{-1})_{t \in [0,1]}) \in \bbbz \ ,$$
where $\spfl$ denotes the spectral flow and $P$ is understood as acting on $L^2(M,E) \ten \bbbc^n$.

If $E$ is $\bbbz/2$-graded and $P$ is odd, then $(M,P,L^2(M,E))$ is an even unbounded Kasparov $(C(M),\bbbc)$-module. 
For a projection $p \in \C(M,M_n(\bbbc))$
$$[p]\ten_{C(M)} [P]=\Ind(p(P^+(1+P^2)^{-1/2})p \in \bbbz ,$$
where $p(P^+(1+P^2)^{-1/2})p$ is understood as an operator from $pL^2(M,E^+)^n$ to $pL^2(M,E^-)^n)$.

\section{Actions by compact Lie groups on $C^*$-algebras}
\label{indseq}

In this section we recall some facts about actions of compact groups on $C^*$-algebras, which will be needed in the following.

We say that an action $\alpha$ of a compact group $G$ on a Fr\'echet space $V$ is continuous, if $$V \mapsto C(G,V), v \mapsto \alpha(v)$$ is continuous. If not specified we will assume an action to be continuous. The space of fixed points is denoted by $V^{\alpha}$.

If $G$ is endowed with an invariant measure with unit volume, then taking the mean
$$\Phi:V \to V^{\alpha},~v\mapsto \int_G \alpha_g(v)~dg$$ is a continuous projection. In particular if $S \subset V$ is a dense subset with $\Phi(S) \subset S$, then $S^{\alpha}\subset V^{\alpha}$ is dense. We will tacitly make use of this fact. We note that if $V$ is a $C^*$-algebra, then $\Phi$ is an unconditional expectation. However we will not need this fact.

We denote by $\R$ the right regular representation defined for $f \in C(G,V)$ and $g \in G$ by $(\R_gf)(h)=f(hg)$. The diagonal action on $C(G,V)$ of $\alpha$ and $R$ is denoted by $\R^{\alpha}$ and is given by $(\R^{\alpha}_g f)(h)=\alpha_g f(hg)$.

Now let $\A$ be a $C^*$-algebra endowed with an action $\alpha$ of a compact Lie group $G$. For simplicity we assume that $G$ is connected.

Endow $G$ with an invariant Riemannian metric with unit volume.

We refer to \cite[\S 7]{ped} for proofs about the properties of crossed products, which we discuss in the following. Since we deal with amenable groups, there is no need to distinguish between crossed product and reduced crossed product.

The convolution product on $C(G,\A)$ is given by $$(f_1 \st f_2)(g):=\int_G f_1(h)\alpha_hf_2(h^{-1}g)~dh \ .$$ 
For clarity we write sometimes $C(G,\A)^{\st}$ for the space $C(G,\A)$ endowed with the convolution product. 

Let $\pi:\A \to B(H)$ be a faithful representation. There is an induced faithful representation $$\tilde \pi:C(G,\A)^{\st} \to B(L^2(G)\ten H) \ ,$$
$$(\tilde \pi(f_1)f_2)(g):=\int_G \pi(\alpha_g^{-1}(f_1(h))f_2(h^{-1}g)~dh \ .$$ 

The algebra $\A \times_{\alpha}G$ is defined as the completion of $C(G,\A)^{\st}$ with respect to the norm of $B(L^2(G) \ten H)$. The homomorphisms $\A \to B(L^2(G)\ten H), a \mapsto \pi(\alpha(a))$ and $G \to B(L^2(G)\ten H), ~g \mapsto R_g$ define inclusions of $\A$ and $G$ into the multiplier algebra $M(\A \times_{\alpha}G)$. 

There is a $C^*$-homomorphism $C:\A \times_{\alpha} G \to B(L^2(G,\A))$ defined for $f_1\in C(G,\A)^{\st}$ and $f_2 \in C(G,\A) \subset L^2(G,\A)$ by
\begin{align}
\label{actL2}
C(f_1) (f_2)(g) &= \int_G \alpha_g^{-1}(f_1(h)) f_2(h^{-1}g)~dh\\
\nonumber &= \int_G \alpha_g^{-1}(f_1(gh^{-1})) f_2(h)~dh \ .
\end{align}

The aim of the following considerations is to define and study an action on $B(L^2(G,\A))$ induced by $\alpha$:

The diagonal action $\R^{\alpha}$ extends to an action on $L^2(G,\A)$ as a Banach space since for $f_1,f_2 \in C(G,\A)$
\begin{align}
\label{L2scal}
\langle \R^{\alpha}_g f_1,\R^{\alpha}_g f_2\rangle_{L^2} &=\int_G \alpha_g (f_1(hg)^* f_2(hg))~dh \\
\nonumber &= \alpha_g \langle f_1,f_2 \rangle \ ,
\end{align}
hence $\|\R^{\alpha}_g(f)\|=\|f\|$.

In general $\R^{\alpha}_g$ is not a Hilbert $\A$-module morphism on $L^2(G,\A)$. However for any $g \in G$ the map $$\Alpha_g:B(L^2(G,\A)) \to B(L^2(G,\A)),~T \mapsto \R^{\alpha}_g T\R^{\alpha}_{g^{-1}}$$ is a norm-preserving $C^*$-homomorphism, which depends continuously on $g$ with respect to the strong operator topology on $B(L^2(G,\A))$. In particular taking the mean $M_{\Alpha}: B(L^2(G,\A)) \to B(L^2(G,\A))^{\Alpha}$ is continuous with respect to the norm  topology.

Furthermore $\Alpha$ restricts to an action on $K(L^2(G,\A))$ with respect to the norm topology.
  
If $k \in C(G\times G,\A)$ is the integral kernel of an operator $K \in K(L^2(G,\A))^{\Alpha}$, then 
\begin{align}
\label{invint}
k(g,h)=\alpha_{g^{-1}}k(e,hg^{-1}) \ .
\end{align}

Let $f \in C(G,\A)^{\st}$. Since the previous equation holds for 
\begin{align}
\label{intop}
k(g,h)&:=\alpha_g^{-1}(f(gh^{-1})) \ ,
\end{align}
we have that $C(f) \in K(L^2(G,\A))^{\Alpha}$. 

Hence the image of $\A \times_{\alpha} G$ under $C$ is contained in $K(L^2(G,\A))^{\Alpha}$. It is well-known that $$C:\A \times_{\alpha} G \to K(L^2(G,\A))^{\Alpha}$$
is an isomorphism.

In general we will identify $\A \times_{\alpha} G$ with $K(L^2(G,\A))^{\Alpha}$.

The first row of the following commutative diagram defines the index: 
$$\xymatrix{
0 \ar[r]& \A \times_{\alpha} G \ar[r]\ar[d]& B(L^2(G,\A))^{\Alpha} \ar[r]\ar[d]& B(L^2(G,\A))^{\Alpha}/\A \times_{\alpha}G \ar[r]\ar[d]& 0  \\
0 \ar[r]& K(L^2(G,\A)) \ar[r] & B(L^2(G,\A)) \ar[r]& B(L^2(G,\A))/K(L^2(G,\A)) \ar[r]& 0 }$$
commutes.

If $F,G \in B(L^2(G,\A))^{\Alpha}$ are such that $FG-1,~ GF-1$ are compact, then $[F] \in \Ki_1(B(L^2(G,\A))^{\Alpha}/\A \times_{\alpha} G)$ and $\Ind_{\alpha}(F) \in \Ki_0(\A \times_{\alpha} G)$ is defined as the image of $[F]$ under the connecting map $$\Ki_1(B(L^2(G,\A))^{\Alpha}/\A \times_{\alpha} G) \to \Ki_0(\A \times_{\alpha} G) \ .$$ From the diagram and by identifying $\Ki_0(K(L^2(G,\A))) \cong \Ki_0(\A)$ we get that $$\iota_*\Ind_{\alpha}(F)=\Ind(F) \ .$$

\section{Invariant pseudodifferential calculus}
\label{atsinseq}

In this section we study invariant pseudodifferential operators and construct an Atiyah--Singer type exact sequence. For notational simplicity we restrict to the scalar case. We begin by studying subspaces of invariant $\A$-valued functions of the Sobolev spaces $H^k(G,\A)$. These subspaces are Hilbert $\A^{\alpha}$-modules, whereas the Sobolev spaces are Hilbert $\A$-modules. 

An $\A^{\alpha}$-valued scalar product on $\A$ is given by 
$$\langle a,b\rangle_{\A^{\alpha}}:=\langle \alpha^{-1}(a),\alpha^{-1}(b)\rangle_{L^2}= \langle \alpha(a),\alpha(b)\rangle_{L^2}= \int_G \alpha_g(a^*b)~dg \ .$$
We write $H(\A)$ for the Hilbert $\A^{\alpha}$-module completion.

The inclusion $\A \to L^2(G,\A),~ a \mapsto \alpha^{-1}(a)$ extends to an isometric isomorphism 
$$\U(\alpha):H(\A) \cong L^2(G,\A)^{\R^{\alpha}}$$
between the Hilbert $\A^{\alpha}$-modules. 

In particular it follows that left multiplication of $\A$ on $\A$ induces a continuous map $$\A \to B(H(\A))$$ 
and that the inclusion $$\A \to H(\A)$$ is continuous.

Furthermore as in eq. \ref{L2scal} one can show that for $g \in G$ the map $\alpha_g:\A \to \A$ extends to a unitary on $H(\A)$ depending in a strongly continuous way on $g$. The extension will be denoted by $\alpha$ as well.

The algebra $\Ai:=\{a \in \A~|~ \alpha(a) \in \C(G,\A)\}$ is dense and closed under holomorphic functional calculus in $\A$. It inherites the structure of a locally $m$-convex Fr\'echet algebra from $\C(G,\A)$, since $\U(\alpha)$ restricts to a bijection between $\Ai$ and  $\C(G,\A)^{\R^{\alpha}}$.

Analogously we can define 
$$\A_i=\{a \in \A~|~ \alpha(a) \in C^i(G,\A)\} \ .$$

Let $\Delta$ be the scalar Laplacian on $G$.
Let $s \in \bbbr$ and let $H^s(G,\A)$ be the Sobolev space of order $s$, which we define here as the Hilbert $\A$-module completion of $\C(G,\A)$ with respect to the $\A$-valued scalar product
$$\langle f_1,f_2\rangle_{H^s} = \langle (1+\Delta)^{s/2}f_1,(1+\Delta)^{s/2}f_2\rangle_{L^2} \ .$$ 

The theory of Sobolev spaces and pseudodifferential operators over unital $C^*$-algebras was introduced in \cite{mf}. We consider $H^s(G,\A)$ as a sub Banach space of $H^s(G,\tilde{\A})$ in order to make use of the results of \cite{mf}.

Since the operators $(1+\Delta)^{s/2}$ are invariant, it is straightforward to verify that the action $\R^{\tilde \alpha}$ on $\C(G,\A)$ extends to a continuous action on $H^s(G,\A)$. We define the Hilbert $\A^{\alpha}$-module
$$H^s(\A)=\{x \in H(\A)~|~ \alpha(x) \in H^s(G,\A)\} \ ,$$ 
which is isometrically isomorphic to $H^s(G,\A)^{\R^{\tilde \alpha}}$ via $\U(\alpha)$. The algebra $\Ai$ is dense in $H^s(\A)$ for all $s \in \bbbr$.

Sobolev's Lemma implies that for $s>\frac{\dim G}{2}+i$ there is a continuous embedding $$H^s(\A) \incl \A_i$$ and that there is an isomorphism of Fr\'echet spaces 
$$\bigcap_{s \in \bbbr} H^s(\A) \cong \Ai \ .$$

We say that a regular operator $T$ on $L^2(G,\A)$ is invariant if its bounded transform $T(1+T^*T)^{-1/2}$ is invariant.

We get from the previous arguments:

\begin{prop}
\label{isoop}
For $r,s \in \bbbr$ there is a continuous homomorphism 
$$B(H^r(G,\A),H^s(G,\A))^{\Alpha} \to B(H^r(\A),H^s(\A)) \ ,$$
$$T \mapsto T_{\alpha}:=\U(\alpha)^*T\U(\alpha) \ .$$

In particular we get an induced homomorphism $$C_{\alpha}:\A\times_{\alpha} G \to B(H(\A)),~f \mapsto C_{\alpha}(f):=C(f)_{\alpha} \ .$$

Furthermore there is an induced map sending invariant regular operators on $L^2(G,\A)$ to regular operators on $H(\A)$.
\end{prop}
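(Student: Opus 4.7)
The plan is to exploit the fact that $\Alpha$-invariance of an operator $T \in B(H^r(G,\A), H^s(G,\A))$ is equivalent to $T$ intertwining the unitary representations $\R^{\tilde\alpha}$ on source and target, so that $T$ restricts to an operator from $H^r(G,\A)^{\R^{\tilde\alpha}}$ to $H^s(G,\A)^{\R^{\tilde\alpha}}$. By the computation in (\ref{L2scal}), the $\A$-valued inner product of two invariant elements actually lies in $\A^{\alpha}$, so each invariant subspace is a Hilbert $\A^{\alpha}$-module and any operator between invariant subspaces is automatically $\A^\alpha$-linear. Adjointability transfers because $T^*$ is also $\Alpha$-invariant, and its restriction serves as the adjoint of the restriction of $T$. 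Conjugating by the isometric isomorphisms $\U(\alpha)$ from $H^r(\A)$ and $H^s(\A)$ onto the respective invariant subspaces then yields $T_\alpha \in B(H^r(\A), H^s(\A))$; the bound $\|T_\alpha\| \leq \|T\|$ is immediate from the isometry, giving continuity, and the homomorphism property follows because restriction to invariant subspaces commutes with composition and $\U(\alpha)$ is a unitary isomorphism onto its image.

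The second claim is then an immediate consequence: since $C : \A \times_{\alpha} G \to K(L^2(G,\A))^{\Alpha}$ lands in $B(L^2(G,\A))^{\Alpha}$, one defines $C_\alpha$ by composition with the map of the first part.

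For the third claim, let $T$ be an invariant regular operator on $L^2(G,\A)$ with bounded transform $F := T(1+T^*T)^{-1/2}$, invariant by definition. By the first part $F_\alpha \in B(H(\A))$ is a contraction, and because $(\cdot)_\alpha$ is a $*$-homomorphism on invariant operators one has $(1 - F^*F)_\alpha = 1 - F_\alpha^* F_\alpha$. To conclude that $F_\alpha$ is the bounded transform of a regular operator $T_\alpha$ on $H(\A)$, I would verify that $1 - F_\alpha^* F_\alpha$ has dense range. Regularity of $T$ gives $\Ran(1 - F^*F) = \Ran(1+T^*T)^{-1}$ dense in $L^2(G,\A)$, and the main step is to transfer this density to the invariant subspace: given an invariant $\xi$ and a sequence $\eta_n \in L^2(G,\A)$ with $(1-F^*F)\eta_n \to \xi$, apply the mean projection $P = \int_G \R^{\tilde\alpha}_g\,dg$; since $P$ commutes with $1-F^*F$ by invariance, the invariant elements $P\eta_n$ satisfy $(1-F^*F) P\eta_n = P(1-F^*F)\eta_n \to P\xi = \xi$, yielding density. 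The hard part is really this density transfer through the mean; the remainder is bookkeeping through the isometry $\U(\alpha)$.
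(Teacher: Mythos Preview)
Your argument is correct and follows essentially the same route as the paper: the first two assertions are treated as consequences of the set-up (restriction to invariant subspaces via $\U(\alpha)$), and the regular-operator statement is handled through the bounded transform $F$, reducing to the density of the range of $1-F_\alpha^*F_\alpha$ (equivalently of its square root) and then invoking the reconstruction theorem for regular operators from their bounded transforms (Lance, Th.~10.4). Your mean-projection argument makes explicit precisely the one-line inference the paper leaves implicit, namely that density of $\Ran(1-F^*F)^{1/2}$ in $L^2(G,\A)$ passes to density of $\Ran(1-F_\alpha^*F_\alpha)^{1/2}$ in $H(\A)$; you might add the citation to Lance for the final reconstruction step.
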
 

\begin{proof}
It remains to show the last assertion.
Let $T$ be an invariant regular operator on $L^2(G,\A)$. We denote by $F$ its bounded transform. Since $\Ran(1-F^*F)^{1/2}$ is dense in $L^2(G,\A)$, we have that $\Ran (1-F_{\alpha}^*F_{\alpha})^{1/2}$ is dense in $H(\A)$. By \cite[Th. 10.4]{l} this implies that $F_{\alpha}$ is the bounded transform of a unique regular operator $T_{\alpha}$ on $H(\A)$ given by $\dom T_{\alpha}=\Ran (1-F_{\alpha}^*F_{\alpha})^{1/2}$ and $T_{\alpha}x:=F_{\alpha}y$ if $x=(1-F_{\alpha}^*F_{\alpha})^{1/2}y$.
\end{proof}

For $f \in C(G,\A), a \in \A \subset H(\A)$ it holds that
$$C_{\alpha}(f)(a)=\int_G f(g) \alpha_g(a)~dg \in \A\subset H(\A)\ .$$ 

Note that for $a,b \in \A$ 
$$C_{\alpha}(a\alpha(b^*))=\Theta_{a,b}:z \mapsto a\langle b,z\rangle_{\A^{\alpha}} \ .$$ Hence $K(H(\A))$ is contained in the image of $C_{\alpha}$. We will see in \S \ref{satact} that the action $\alpha$ is principal if and only if $C_{\alpha}$ is an isomorphism onto $K(H(\A))$. In general $C_{\alpha}$ need not be injective and its image not contained in $K(H(\A))$. 

For the remainder of this section assume that $\A$ is unital. Then we have an analogue of the Atiyah exact sequence for pseudodifferential operators: We denote the closure in $B(L^2(G,\A))$ of the algebra of pseudodifferential operators of nonnegative order on $\C(G,\A)$ by $\ov{\Psi}(G,\A)$. 

Let $SG$ denote the sphere bundle of $G$ with the induced $G$-action.

\begin{prop}
There is a commuting diagram 
$$\xymatrix{
0 \ar[r]& \A \times_{\alpha} G \ar[r]\ar[d]& \ov{\Psi}(G,\A)^{\Alpha} \ar[r]\ar[d]& C(SG,\A)^{\R^{\alpha}} \ar[r]\ar[d]& 0  \\
0 \ar[r]& K(L^2(G,\A)) \ar[r]& \ov{\Psi}(G,\A) \ar[r]^{\sigma}& C(SG,\A) \ar[r]& 0 }$$
with exact rows.
\end{prop}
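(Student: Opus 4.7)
The plan is to derive the top row as the $\Alpha$-invariants of the bottom row. The bottom row is the standard Atiyah--Singer type short exact sequence for Mishchenko--Fomenko pseudodifferential operators on the closed Riemannian manifold $G$ with coefficients in $\A$, which we take from \cite{mf}. The identification $\A \times_{\alpha} G \cong K(L^2(G,\A))^{\Alpha}$ from the previous section takes care of the leftmost vertical map, so the essential content is: (i) $\Alpha$-equivariance of the bottom row, (ii) commutativity of the diagram, and (iii) exactness of the top row.

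For (i) and (ii), decompose $\R^{\alpha}_g = \alpha_g \circ R_g$, where $R_g$ is right translation on $G$ and $\alpha_g$ acts pointwise on $\A$-valued functions. Right translation is an isometry of $G$, so conjugation by $R_g$ preserves $\ov{\Psi}(G,\A)$ and transforms the principal symbol via the induced cotangent diffeomorphism of $SG$. The pointwise action of $\alpha_g$ on coefficients commutes with scalar differential operators, preserves $\ov{\Psi}(G,\A)$, and acts on symbols pointwise on $\A$. Combining, each $\Alpha_g = \mathrm{Ad}(\R^{\alpha}_g)$ preserves $\ov{\Psi}(G,\A)$ and satisfies $\sigma \circ \Alpha_g = \R^{\alpha}_g \circ \sigma$, where on the right $\R^{\alpha}$ denotes the diagonal action on $C(SG,\A)$. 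Restricting to invariants yields the top row and commutativity with the inclusions.

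For (iii), left exactness is immediate from exactness of the bottom row: if $T \in \ov{\Psi}(G,\A)^{\Alpha}$ satisfies $\sigma(T)=0$, then $T$ is compact and invariant, hence $T \in K(L^2(G,\A))^{\Alpha} = \A \times_{\alpha} G$. For surjectivity of $\sigma$ onto $C(SG,\A)^{\R^{\alpha}}$, given an invariant symbol $s$, choose any lift $P \in \ov{\Psi}(G,\A)$ with $\sigma(P)=s$ and form the mean $M_{\Alpha}(P)$. Since $\ov{\Psi}(G,\A)$ is a norm-closed convex subset of $B(L^2(G,\A))$ that is preserved by every $\Alpha_g$, and since $M_{\Alpha}(P)$ lies in the norm-closed convex hull of the orbit $\{\Alpha_g(P):g\in G\}$, we get $M_{\Alpha}(P) \in \ov{\Psi}(G,\A)^{\Alpha}$. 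By norm-continuity of $\sigma$ and equivariance, $\sigma(M_{\Alpha}(P)) = \Phi(\sigma(P)) = \Phi(s) = s$, where $\Phi$ denotes the analogous mean on $C(SG,\A)$ associated to $\R^{\alpha}$.

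The main obstacle is the averaging step: since $\Alpha$ is strongly but not in general norm continuous, a direct Bochner-integral argument placing $M_{\Alpha}(P)$ in $\ov{\Psi}(G,\A)$ is not available. The closed-convex-hull argument above---using only that $\ov{\Psi}(G,\A)$ is norm-closed, convex, and $\Alpha$-stable, together with the norm-continuity of $M_{\Alpha}$ already established in \S \ref{indseq}---circumvents this difficulty and is the only nontrivial input beyond the classical exact sequence from \cite{mf}.
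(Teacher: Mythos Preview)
Your overall strategy --- lift an invariant symbol to $\ov{\Psi}(G,\A)$ and then average over $G$ --- is exactly what the paper does; its proof in fact treats only the surjectivity of $\sigma$ on invariants, taking your points (i) and (ii) as understood.

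The gap is in the averaging step. You assert that $M_{\Alpha}(P)$ lies in the \emph{norm}-closed convex hull of the orbit $\{\Alpha_g(P):g\in G\}$. From strong-operator continuity of $g\mapsto\Alpha_g(P)$ one gets that Riemann-sum approximants (finite convex combinations of orbit points) converge to $M_{\Alpha}(P)$ strongly, so $M_{\Alpha}(P)$ lies in the strong-operator closure of the convex hull; but $\ov{\Psi}(G,\A)$ is only known to be norm-closed, and for convex subsets of $B(L^2(G,\A))$ the strong closure can be strictly larger than the norm closure. The norm-continuity of the linear map $M_{\Alpha}$ that you invoke is just the contraction estimate $\|M_{\Alpha}(P)\|\le\|P\|$ and says nothing about which norm-closed subspace $M_{\Alpha}(P)$ lands in. So the argument, as written, does not place $M_{\Alpha}(P)$ in $\ov{\Psi}(G,\A)$.

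The paper closes precisely this gap by invoking \cite[Lemma~6.1]{tr}, which shows that $\Alpha$ restricts to a \emph{norm}-continuous action on the subalgebra $\ov{\Psi}(G,\A)$ (not merely on $K(L^2(G,\A))$). With that in hand the Bochner integral $\int_G\Alpha_g(P)\,dg$ exists inside the norm-closed algebra $\ov{\Psi}(G,\A)$ and the surjectivity follows immediately. An alternative route, if you want to avoid the citation, is to note that $\ov{\Psi}(G,\A)$ sits in the $\Alpha$-equivariant extension $0\to K(L^2(G,\A))\to\ov{\Psi}(G,\A)\to C(SG,\A)\to 0$ with $\Alpha$ norm-continuous on both ideal and quotient, and to argue that norm-continuity is inherited by the middle term; but that is itself a lemma you would have to prove, not a consequence of the convex-hull picture.
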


\begin{proof}
We only have to show that $\sigma:\ov{\Psi}(G,\A)^{\Alpha} \to C(SG,\A)^{\R^{\alpha}}$ is surjective. For $f \in C(SG,\A)^{\R^{\alpha}}$ let $P(f) \in \ov{\Psi}(G,\A)$ be such that $\sigma(P(f))=f$. It follows from \cite[Lemma 6.1 ]{tr}, that the action $\Alpha$ is well-defined and continuous on $\ov{\Psi}(G,\A)$. Hence the mean $\Phi(P(f))$ is an element in $\ov{\Psi}(G,\A)^{\Alpha}$ with $\sigma(\Phi(P(f)))=f$.
\end{proof}

There is a $G$-equivariant isomorphism of sphere bundles $G \times S^{\dim G-1} \to SG$, where the $G$-action on $S^{\dim G-1}$ is trivial. Hence $$C(SG,\A)^{\R^{\alpha}} \cong C(S^{\dim G-1},\A)$$ via restriction to the fiber over $e \in G$. 
The rows are subsequences of the rows in diagram at the end of \S \ref{indseq}. Hence if $Q$ is an invariant pseudodifferential operator, then $\Ind_{\alpha}(Q)$ does only depend on the class $[\sigma(Q)] \in \Ki_1(C(SG,\A)^{\R^{\alpha}})$ of the symbol of $Q$. The sequence above corresponds to the sequence defined in \cite{co} for $G=\bbbr^n$.

The index theory for invariant pseudodifferential operators over $C^*$-algebras has been studied in \cite{tr}. The index defined there is an element in $K_*^G(\A)$. It agrees with $\Ind_{\alpha}$ via the isomorphism $K_*^G(\A) \cong K_*(\A\times_{\alpha} G)$. Also the powerful machinery of equivariant $KK$-theory can be applied to the present situation. One can apply the index theorem in \cite[\S 4.2]{wa} in order to obtain \cite[Th. 10]{co} for compact Lie groups.

\section{An index formula for Dirac operators}
\label{indDirtheor}

In the following we consider the index theory for Dirac operators in more detail. We associate a class in $KK_*(\A,\A \times_{\alpha} G)$ to an invariant Dirac operator and give the index theorem for the pairing with $K_*(\A)$. Here $\A$ is allowed to be nonunital. For the index theorem we will assume the existence of an approximative unit with certain properties.

We assume that $G$ is endowed with the trivial spin structure. Write $\gl:=T_eG$. We identify $TG$ with $G \times \gl$. 

Let $E$ be a hermitian finite dimensional vector space that is a Clifford module of the Clifford algebra $Cl(\gl)$ and let $G$ act trivially on it. Then $E \times G$ is a Clifford bundle on $G$. Endow it with an invariant Clifford connection and let $\dira_E$ be the associated Dirac operator. If $\dim G$ is even, then $E$ is $\bbbz/2$-graded and $\dira_E$ is odd. 

Then $$\dira_E=\sum_{i=1}^{\dim G} c(X_i)X_i +\omega \ ,$$ where $\{X_i,~i=1, \dots, \dim G\}$ is an orthonormal basis of $\gl$ and $\omega \in \End E$.

We denote the operator $\dira_E$ acting on the Hilbert $\A$-module $L^2(G, E \ten \A)$ by $\Dir_E$ in the following. Taking closures is understood where necessary.

The operator $\Dir_E$ acts also as an operator on $K(L^2(G,E \ten \A))$ by $$(\Dir_E +i)^{-1}K(L^2(G,E \ten \A)) \to K(L^2(G,E \ten \A)),~ K \mapsto \Dir_E K \ .$$
We fix a unit vector $v \in E$ (which we consider as an element of the dual space of $E$) and identify $K(L^2(G,\A)) \ten E$ with $K(L^2(G,\A)) \ten (E \ten v) \subset K(L^2(G,E \ten \A))$. The following constructions do not depend on the choice of $v$. The operator $\Dir_E$ defines a regular operator on the Hilbert $K(L^2(G,\A))$-module $K(L^2(G,\A)) \ten E$ and restricts to a regular operator on the Hilbert $\A \times_{\alpha} G$-module $\A \times_{\alpha} G \ten E$. A concrete formula for the action of the vectors $X_i$ can be obtained from eq. \ref{actL2}: If $f_1,f_2 \in \C(G,\A)$ fulfill $X_iC(f_1)=C(f_2)$, then (with $X_i$ acting on the variable $g$)
$$\alpha_g^{-1}f_2(gh^{-1})=X_i(\alpha_g^{-1}f_1(gh^{-1}))=\alpha_h^{-1}X_i(\alpha_{gh^{-1}}^{-1}f_1(gh^{-1})) \ ,$$
thus, by setting $h=e$, we get that 
\begin{align}
\label{vecfie}
f_2=\alpha(X_i (\alpha^{-1}f_1)) \ .
\end{align} 

For $a \in \Ai$ we have that 
\begin{align}
\label{comm}
[\Dir_E,\alpha^{-1}(a)]\in \C(G,\A)^{\R^{\alpha}} \ten \End E \ .
\end{align} 

Since furthermore $\alpha^{-1}(a)(\Dir_E^2+1)^{-1} \in \A \times_{\alpha} G\ten \End E$ for all $a \in \A$, the triple $(\alpha^{-1},\Dir_E,\A \times_{\alpha} G\ten E)$ is an unbounded Kasparov $(\A,\A \times_{\alpha} G)$-module, whose parity equals the parity of $\dim G$. The induced class is denoted by $[\Dir_E] \in \KKi_{\dim G}(\A,\A \times_{\alpha} G)$. By homotopy invariance this class does not depend on $\omega$.

In the following we deduce an index theorem for the pairing of $\Ki_*(\A)$ with $[\Dir_E]$ from the index theorem proven in \cite[\S 4.2]{wa} and its even counterpart. The main point here is that we allow for non-unital algebras, motivated by the situation considered in \cite{pre}.

We assume that there is an approximate unit $\{e_i\}_{i \in \bbbn}\subset \Ai^{\alpha}$ of $\A$ such that $e_ie_{i+1}=e_i$. See \cite[\S 2.2]{pre} and references therein for the relevance of this type of approximate unit in the context of spectral triples.

Define $\A_i$  as the closure of the algebra $e_i\A e_i$ in $\A$.  Let $\A_c=\cup_{i \in \bbbn} \A_i$ endowed with the direct limit topology. 

Let $\tau$ be an invariant trace on $\A$ that restricts to a continuous trace on $\A_i$ for any $i \in \bbbn$. 

We denote the induced trace on $\A_c \ten \End E$, which is defined by tensoring with the canonical trace on $\End E$, by $\tau$ as well. 

We get an induced homomorphism 
$$\Tr_{\tau}: \Ki_0(\A \times_{\alpha} G) \to \Ki_0(K(L^2(G,\A))) \cong \Ki_0(\A) \cong \lim_{i \to \infty}\Ki_0(\A_i) \stackrel{\tau}{\lr} \bbbr \ .$$

In the following proposition we identify $\Ki_*(\A)$ with $\Ker(\Ki_0(\tilde \A) \to \Ki_*(\bbbc))$.

\begin{prop} 
\label{indtheo}
\begin{enumerate} 
\item Assume that $\dim G$ is even. 
For projections $p,q \in M_n(\tilde \A_i)$ with $[p]-[q] \in \Ki_0(\A)$ and such that $\alpha(p),\alpha(q) \in \C(G,M_n(\tilde \A_i))$ we have that
$$\Tr_{\tau}(([p]-[q])\ten_{\A}[\Dir_E])=C\tau(p[\Dir_{\alpha},p]^{\dim G})-\tau(q[\Dir_{\alpha},q]^{\dim G}) \ .$$
\item Assume that $\dim G$ is odd. 
For a unitary $u \in M_n(\tilde \A_i)$ with $\alpha(u) \in \C(G,M_n(\tilde \A_i))$ we have that
$$\Tr_{\tau}([u]\ten_{\A}[\Dir])=C\tau\left(u^{-1}[\Dir_{\alpha},u]([\Dir_{\alpha},u^{-1}][\Dir_{\alpha},u])^{(\dim G-1)/2}\right) \ .$$
\end{enumerate}
Here $C \in \bbbc$ is a constant, which only depends on $\dim G$.
\end{prop}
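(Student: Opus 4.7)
The plan is to reduce the statement to the unital index theorem \cite[\S 4.2]{wa} (and its even counterpart) by localizing along the approximate unit $\{e_i\}$, and to translate the pairing to the Hilbert $\A^{\alpha}$-module $H(\A)\ten E$ where $\Dir_{\alpha}$ lives.

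First, I would transport the Kasparov module $(\alpha^{-1},\Dir_E,\A\times_{\alpha}G\ten E)$ to the side of $H(\A)$ via the unitary $\U(\alpha)$ of Proposition \ref{isoop}: there $\Dir_E$ becomes $\Dir_{\alpha}$, and the representation $\alpha^{-1}$ of $\A$ becomes the canonical left multiplication on $H(\A)$. Combining \eqref{comm} with the hypothesis $\alpha(p),\alpha(q),\alpha(u)\in\Ci(G,M_n(\tilde\A_i))$, the iterated commutators $[\Dir_{\alpha},p]$, $[\Dir_{\alpha},q]$ and $[\Dir_{\alpha},u^{\pm 1}]$ all take values in $\Ci(G,M_n(\tilde\A_i))\ten\End E$, so that the expressions on the right-hand side of the claim are a priori well-defined.

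Second, since $e_i\in\Ai^{\alpha}$ and $e_ie_{i+1}=e_i$, the subalgebra $\A_i$ is $\alpha$-invariant with unit $e_i$, and $\tau$ restricts to a continuous invariant trace on it. The projections $p,q$ (resp.\ the unitary $u$) lie in $M_n(\tilde\A_i)$, so the Kasparov product $([p]-[q])\ten_{\A}[\Dir_E]$, resp.\ $[u]\ten_{\A}[\Dir_E]$, is represented by the Mishchenko--Fomenko index of the compression of $\Dir_E$ by $p,q$ or $u$, an operator that lives entirely over the unital algebra $\tilde\A_i$. Pushing the resulting index forward via $\iota_*$ yields an element of $\Ki_0(K(L^2(G,\A)))\cong\Ki_0(\A)$ to which $\tau$ applies; both sides of the asserted identity depend only on the restricted data over $\tilde\A_i$, so the claim reduces to a unital statement.

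Third, I would invoke the index theorem of \cite[\S 4.2]{wa} and its even counterpart in that unital setting. The theorem expresses the $\tau$-pairing of the Mishchenko--Fomenko index with the Chern character of a smooth idempotent (resp.\ unitary) as a local expression formed from iterated commutators with $\Dir_{\alpha}$, paired with the Chern character of the symbol of the invariant Dirac operator on $G$. Since $\dira_E$ is the Dirac operator for the trivial spin structure on $G$, its symbolic contribution is a scalar top form and the universal formula collapses to the displayed right-hand side with a constant $C=C(\dim G)$ coming from standard Chern-character normalizations. The main obstacle I anticipate is verifying the $\tau$-summability of the cut-down commutator products in the non-unital ambient algebra; this is exactly where the assumption $e_ie_{i+1}=e_i$ and the continuity of $\tau$ on each $\A_i$ enter, and it is why the support condition $p,q,u\in M_n(\tilde\A_i)$ is imposed.
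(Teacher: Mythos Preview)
Your overall strategy---localize to $\tilde\A_i$ using the approximate unit and then invoke the unital index theorem of \cite[\S 4.2]{wa}---is exactly the paper's. Two points deserve correction, however.

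First, the ``transport'' in your opening paragraph is misconceived. The unitary $\U(\alpha)$ of Proposition \ref{isoop} is an isometry $H(\A)\to L^2(G,\A)^{\R^{\alpha}}$; it does not touch the Hilbert $\A\times_{\alpha}G$-module $\A\times_{\alpha}G\ten E$ on which the Kasparov cycle lives, and there is no way to move that cycle to a Kasparov $(\A,\A^{\alpha})$-module outside the saturated case treated in \S\ref{satact}. What $\U(\alpha)$ actually buys you is only the interpretation of the right-hand side: since $[\Dir_E,\alpha^{-1}(a)]=c(d\alpha^{-1}(a))$ is a multiplication operator (this is the identity the paper singles out, together with the relation between the Clifford trace and the Berezin integral, as the mechanism that collapses the general formula of \cite{wa} to the stated one), it is invariant and, conjugated by $\U(\alpha)$, becomes $[\Dir_{\alpha},a]\in\A\ten\End E$. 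No transport of the $\KKi$-class is needed.

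Second, $\A_i=\overline{e_i\A e_i}$ need not have $e_i$ as a unit: the $e_i$ are only an approximate unit, not projections. The paper passes to the formal unitalization $\tilde\A_i$ and extends the trace by $\tau_i(1):=\tau(e_{i+1})$ (using $e_ie_{i+1}=e_i$), then organizes the reduction via the obvious commuting squares relating $\Ki_*(\A_i)$, $\Ki_*(\tilde\A_i)$, and $\Ki_*(\A)$ together with the corresponding crossed products. You should make this explicit rather than asserting that $e_i$ is a unit.
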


We give the index formulas in the present form in order to point out the analogy to other index formulas in noncommutative geometry. For calculations one may use the explicit formulas given in \cite{wa}.

\begin{proof} For unital $\A$ and finite trace the proposition follows in a standard way from the results in \cite{wa} by using $[\Dir, f]=c(df)$ for $f \in \C(G,\A)$ and the relation between the trace on the Clifford algebra and the Berezin integral on the exterior algebra.  

From this we deduce the general case:

The action $\alpha$ restricts to an action $\alpha_i$ on $\A_i$. We also have a class $[\Dir_i] \in \KKi_{\dim G}(\A_i,\A_i \times_{\alpha_i} G)$ defined as above from $\dira_E$. The diagram
$$\xymatrix{
\Ki_*(\A_i) \ar[rr]^-{\ten_{\A_i} [\Dir_i]}\ar[d]&& \Ki_{*+\dim G}(\A_i \times_{\alpha_i} G) \ar[r]^-{\tau}\ar[d] & \bbbc \ar[d]^{=}\\
\Ki_*(\A) \ar[rr]^-{\ten_{ \A } [\Dir_E ]}&& \Ki_{*+\dim G}(\A \times_{\alpha} G) \ar[r]^-{\tau} & ~\bbbc \ .} $$
commutes and the vertical maps, which are induced by the inclusion, become isomorphisms in the inductive limit. The trace extends to a trace $\tau_i$ the unitalization $\tilde \A_i$ by setting $\tau_i(1)=\tau(e_{i+1})$. We also have induced action $\tilde \alpha_i$ on  $\tilde \A_i$ and an element $[\tilde \Dir_i] \in \KKi_{\dim G}(\tilde \A_i,\tilde \A_i \times_{\tilde \alpha_i} G)$.

As before there is a commutative diagram
$$\xymatrix{
\Ki_*(\A_i) \ar[rr]^-{\ten_{\A_i} [\Dir_i]}\ar[d]&& \Ki_{*+\dim G}(\A_i \times_{\alpha_i} G) \ar[d]\ar[r]^-{\tau} &\bbbc \ar[d]^=\\
\Ki_*(\tilde \A_i) \ar[rr]^-{\ten_{\tilde \A_i} [ \tilde \Dir_i]} && \Ki_{*+\dim G}(\tilde \A_i\times_{\tilde \alpha_i} G) \ar[r]^-{\tau_i} &\bbbc \ .}$$

The assertion follows now from the unital case.
\end{proof}

\section{Constructing semifinite von Neumann algebras}
\label{semtrac}

We discuss some constructions concerning semifinite von Neumann algebras. See \cite{d} for more information on semifinite von Neumann algebras in general and on Hilbert algebras.

For a von Neumann algebra $\N$ with semifinite trace $\tau$ we denote by $\K(\N)$ the closure of the ideal $$l^1(\N)=\{A \in \N~ |~ \tau(|A|) < \infty\} \ .$$ Since $l^1(\N)$ is dense and closed under holomorphic functional calculus in $\K(\N)$ the trace induces a homorphims $\tau: \Ki_0(\K(\N)) \to \bbbc$.

In order to streamline the argumentation in the following we introduce the following notion:

A pair $(A,\tau)$, where $A$ is a pre-$C^*$-algebra and $\tau$ a trace on $A$, is called Hilbert pair, if $A$ is a Hilbert algebra with respect to the scalar product $(a,b) \mapsto \tau(a^*b)$ and if multiplication from the left induces a continuous map $\pi: A \to B(L_{\tau}^2(A))$. Here $L_{\tau}^2(A)$ denotes the Hilbert space completion of $A$. 

Denote by $\ov{A}$ the completion of $A$, which is a $C^*$-algebra.
If $(A,\tau)$ is a Hilbert pair, then $\pi$ is faithful and extends to a faithful representation of the multiplier algebra $M(\ov{A})$. Furthermore the trace $\tau$ extends to a semifinite trace on the von Neumann algebra $A''$ and we have that $\pi(A) \subset l^2(\N)$, $\pi(\ov A) \subset \K(A'')$ and $\pi((M(\ov{A})) \subset A''$ (see \cite[\S 3.12]{ped}). In particular there is a homomorphism
$$\Ki_0(\ov A) \to \Ki_0(\K(A'')) \stackrel{\tau}{\lr} \bbbc \ .$$

In general we will drop $\pi$ from the notation and identify $\ov A$ with its image in $B(L_{\tau}^2(A))$.

We also need tensor products:

Assume that $(A_1,\tau_1)$, $(A_2,\tau_2)$ are Hilbert pairs. We consider the algebraic tensor product $A_1 \odot A_2$ as a subalgebra of the spatial tensor product $\ov{A}_1 \ten \ov{A}_2$. (In our case $\ov{A}_1$ will always be nuclear, so that any tensor product works.) Then $(A_1 \odot A_2,\tau:= \tau_1 \ten \tau_2)$ is a Hilbert pair: The above construction induces a Hilbert structure on $A_1 \ten A_2$ and $L_{\tau}^2(A \ten B) =L^2_{\tau}(A) \ten L^2_{\tau}(B)$. This implies that we get a continuous map $A \ten B \to B(L_{\tau}^2(A\ten B))$.

\section{Semifinite spectral triples from Dirac operators}
\label{spectrip}

Semifinite spectral triples, also called unbounded Breuer-Fredholm modules, were first introduced in \cite{cp} as a generalization of spectral triples. 

\begin{ddd} Let $\N$ be a semifinite von Neumann algebra endowed with a semifinite normal faithful trace, $A$ an involutive Banach algebra with a continuous morphism of involutive Banach algebras $A \to \N$ (which will be suppressed in the notation) and $D$ a selfadjoint operators affiliated to $\N$. The triple $(A,D,\N)$ is called an odd $p$-summable semifinite spectral triple if there is a dense involutive subalgebra $A_c$ of $A$ such that for all $a \in A_c$
$$a(1+D^2)^{-1/2} \in l^p(\N)$$
and
$$[a,D] \in \N \ .$$
The triple $(A,D,\N)$ is called an even semifinite spectral triple if $\N$ is in addition $\bbbz/2$-graded, the operator $D$ is odd and the image of $A$ is in $\N^+$. 
\end{ddd}

An odd resp. even semifinite spectral triple induces a homomorphism from $\Ki_1(A)$ resp. $\Ki_0(A)$ to $\bbbr$ (see \cite[\S 2.4]{bf}). 

In this section we associate a semifinite spectral triple to the Kasparov module $(\alpha^{-1},\Dir_E,\A \times_{\alpha} G\ten E)$ defined in \S \ref{indDirtheor}. 

First we define the semifinite von Neumann algebra.

We assume that the trace $\tau$ is faithful. Then $(\A_c,\tau)$ is a Hilbert pair. Denote by $\Tr$ the canonical trace on the space of trace class operators on $L^2(G,E)$. We write $I:C(G\times G,\End E) \to K(L^2(G,E))$ for the map assigning to an integral kernel the corresponding operator and $\Cg(E)$ for the image of $I$. The pair $(\Cg(E),\Tr)$ is also a Hilbert pair. 

The tensor product construction in the previous section implies that the trace $\Tr \ten \tau$ on the dense subalgebra $\Cg(E) \ten \A \subset K(L^2(G,E \ten \A))$ extends to a semifinite trace $\Tr_{\tau}$ on the enveloping von Neumann algebra $\M$ of $K(L^2(G,E \ten \A))$. Furthermore the tensor product of the map $I$ with the identity on $\A_c$ defines a continuous map $C(G\times G,\End E \ten \A_c)\to l^2(\M)$.

We denote by $\N$ the enveloping von Neumann algebra of $\A\times_{\alpha} G$ and consider $\N\ten \End E$ as a subalgebra of $\M$.
The trace $\Tr_{\tau}$ restricts to a semifinite trace on $\N\ten \End E$ (here we use its invariance). 

It is clear that for $E=\bbbc$ the composition
$$\Ki_0(\A \times_{\alpha} G) \to \Ki_0(\K(\N)) \stackrel{\Tr_{\tau}}{\lr} \bbbr$$ agrees with the map $\Tr_{\tau}$ defined before Prop. \ref{indtheo}, which justifies our notation.

Next we define an operator affiliated to $\N \ten \End E$ from $\Dir_E$.

For an element $\lambda$ in the (discrete) spectrum $\sigma(\Dir_E)$ of $\Dir_E$ let $P(\lambda) \in B(L^2(G,E \ten \A))$ be the projection onto the corresponding eigenspace. Then $e_iP(\lambda) \in \Cg(E \ten \A_c)^{\Alpha} \subset l^2(\N)\ten \End E$. The equality $e_iP(\lambda) e_{i+1}P(\lambda)=e_iP(\lambda)$ implies that $e_iP(\lambda) \in l^1(\N)$. Let $H$ be a Hilbert space on which $\N\ten \End E$ is faithfully represented.
We define $\D_E$ as the closure of the unbounded symmetric operator $\oplus_{\lambda \in \sigma(\Dir_E)} \lambda P(\lambda)$ on $H$. This is a selfadjoint operator affiliated to $\N\ten \End E$.  

Furthermore we have a map $\A \to M(\A \times_{\alpha}G) \to \N$ and by eq. \ref{comm} 
$$[a,\D_E] \in \N\ten \End E$$ 
for all $a \in \Ai$.

For $f\in C_0(\bbbr)$ and $a\in \A$ we have that $af(\D_E) \in \K(\N)\ten \End E$.

If $a \in \A_c$ and $f \in C_0(\bbbr)$ is a positive function such that $f(\Dir_E) \in l^1(L^2(G,E))$, then $af(\D) \in l^1(\N) \ten \End E$.

We summarize:

\begin{prop}
The triple $(\A,\D_E,\N\ten \End E)$ is a semifinite spectral triple. Its degree (even or odd) equals the parity of the dimension of the group $G$. The semifinite spectral triple is $p$-summable for $p>\dim G$.
\end{prop}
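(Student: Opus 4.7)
The statement has four pieces to check: a continuous $*$-morphism $\A \to \N \ten \End E$, selfadjointness and affiliation of $\D_E$ to $\N \ten \End E$, a dense $*$-subalgebra on which both bounded commutators and the $l^p$-resolvent condition hold, and the grading compatibility. Almost every ingredient is already in place just before the proposition; the proof is largely bookkeeping. The morphism is the displayed composition $\A \to M(\A \times_{\alpha} G) \to \N$ tensored with the unital inclusion $\bbbc \hookrightarrow \End E$. The operator $\D_E$ is selfadjoint as the closure of a direct sum of selfadjoint finite-rank blocks $\lambda P(\lambda)$, and it is affiliated to $\N \ten \End E$ because each $P(\lambda)$ is $\Alpha$-invariant (as $\Dir_E$ is invariant) and of finite rank on $L^2(G,E)$, so the fact $e_i P(\lambda) \in l^1(\N) \ten \End E$ already forces $P(\lambda) \in \N \ten \End E$.

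For the dense $*$-subalgebra take $A_c := \bigcup_i e_i \Ai e_i$, which is dense in $\A$ because $\Ai$ is dense in $\A$ and $e_i b e_i \to b$ for every $b \in \Ai$. Each element of $A_c$ lies in $\Ai$, so \eqref{comm} together with Prop.~\ref{isoop} (transporting by $\U(\alpha)$) gives $[\D_E, a] \in \A \times_{\alpha} G \ten \End E \subset \N \ten \End E$.

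The main content is the $l^p$-condition for $p > \dim G$. Weyl's law on the compact Riemannian manifold $G$ gives $(1+\Dir_E^2)^{-p/2} \in l^1(L^2(G,E))$; equivalently, writing $V_\lambda$ for the $\lambda$-eigenspace of $\Dir_E$, $\sum_\lambda (1+\lambda^2)^{-p/2} \dim V_\lambda < \infty$. Since $e_i \in \Ai^\alpha$, it corresponds under $\U(\alpha)$ to the constant function $g \mapsto e_i$ and therefore commutes with $\D_E$ (the derivations $X_i$ annihilate constant $\A$-valued functions). Hence $|e_i(1+\D_E^2)^{-1/2}|^p = e_i^p(1+\D_E^2)^{-p/2}$ and
$$\Tr_\tau\bigl(e_i^p(1+\D_E^2)^{-p/2}\bigr) = \tau(e_i^p) \cdot \sum_\lambda (1+\lambda^2)^{-p/2} \dim V_\lambda < \infty,$$
using $0 \le e_i^p \le \|e_i\|^{p-1} e_i$ and the assumed finiteness of $\tau$ on $\A_i$. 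Thus $e_i(1+\D_E^2)^{-1/2} \in l^p(\N \ten \End E)$, and for $a = e_i a' e_i \in A_c$ the ideal property of $l^p$ yields $a(1+\D_E^2)^{-1/2} = (e_i a')\bigl(e_i(1+\D_E^2)^{-1/2}\bigr) \in l^p$.

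The parity is immediate: for $\dim G$ even the $\bbbz/2$-grading of $E$ induces compatible gradings on $\End E$ and $\N \ten \End E$ with $\D_E$ odd and $\A$ represented evenly, while for $\dim G$ odd no grading is needed. The only nontrivial step in the plan is transferring the trace-class estimate from $L^2(G,E)$ to the semifinite setting; this is handled cleanly by the exact commutation $[e_i, \D_E] = 0$ coming from $e_i \in \A^\alpha$, which lets the trace factor as a product of the Weyl sum and $\tau(e_i^p)$.
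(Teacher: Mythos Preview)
Your argument is correct and follows the paper's approach; the paper in fact gives no separate proof, the proposition being stated as a summary of the facts assembled just before it, and your explicit $l^p$-computation (via $[e_i,\D_E]=0$ and the tensor factorization of $\Tr_\tau$) is precisely how one justifies the paper's one-line claim that $af(\D_E)\in l^1(\N)\ten\End E$ whenever $f(\Dir_E)\in l^1(L^2(G,E))$. Two harmless slips: $P(\lambda)$ is not itself finite rank (rather $P(\lambda)=P_\lambda\ten 1$ with $P_\lambda$ finite rank on $L^2(G,E)$), and $[\D_E,a]$ lands in $M(\A\times_\alpha G)\ten\End E\subset\N\ten\End E$ rather than in $\A\times_\alpha G\ten\End E$, the transfer being via the multiplier embedding of $\A$ rather than via Prop.~\ref{isoop}.
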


The results in \cite{knr} imply that the induced map $\Ki_{\dim G}(\A) \to \bbbr$ agrees with the composition $\Ki_{\dim G}(\A) \stackrel{\ten_{\A}[\Dir_E]}{\lr} \Ki_0(\A \times_{\alpha} G) \stackrel{\Tr_{\tau}}{\lr} \bbbr$.
Using this, one may probably get an alternative proof of Prop. \ref{indtheo} for faithful traces from the local index formula in the semifinite setting \cite{cpr1}\cite{cpr2}.

\section{The dual Pimsner-Voiculescu sequence}
\label{pimsvoi}

This section is devoted to circle actions. We will see how the dual Pimsner-Voiculescu sequence is related to the index theory studied above. We refer to \cite[\S 10.6]{bl} for its definition and proof. At some points we will use different sign conventions.

Let $T=\bbbr/\bbbz$. We take $E=\bbbc$ and write $\Dir=-i \frac{d}{dx}$.

There is a dual action $\hat \alpha$ of $\bbbz$ on $\A \times_{\alpha} T$: Let $S\in B(L^2(T,\A))$ be defined by $$(Sf)(x)=e^{2\pi i x}f(x) \ .$$ Then $B(L^2(T,\A))^{\Alpha} \to B(L^2(G,\A))^{\Alpha},~ A \mapsto SAS^{-1}$ is well defined and  $$C(\hat \alpha(1)f)=SC(f)S^{-1}$$ for $f \in  \A \times_{\alpha} T$.

The dual Pimsner-Voiculescu sequence associated to an action by the circle $T$ on $\A$ is, with $\hat\alpha_*:=\hat \alpha(1)_*$,
$$\xymatrix{
\Ki_0(\A \times_{\alpha} T) \ar[r]^{1-\hat \alpha_*} & \Ki_0(\A \times_{\alpha} T) \ar[r]^{\iota_*} & \Ki_0(\A) \ar[d]^{q^*} \\
\Ki_1(\A)\ar[u]_{q^*}&  \Ki_1(\A \times_{\alpha} T) \ar[l]_{\iota_*} &  \Ki_1(\A \times_{\alpha} T) \ar[l]_{1-\hat \alpha_*} \ .}$$

The definition of the map $q_*$ will be recalled in the proof of the following proposition. The definition of $\iota_*$ will be given in the second proposition.

\begin{prop}
\label{toract}
The map 
$$\ten_{\A} [\Dir]:\Ki_*(\A) \to \Ki_{*+1}(\A \times_{\alpha} T)$$ 
agrees with $q_*$.
\end{prop}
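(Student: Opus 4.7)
The plan is to realize both sides as boundary maps of the same extension class. I will first produce a Toeplitz-type extension whose index map is $q_*$, and then show that its class in $\mathrm{Ext}(\A, \A \times_\alpha T)$ is precisely $[\Dir]$.

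First I would Fourier-expand $L^2(T,\A) \cong \bigoplus_{n\in\bbbz} \A$ as a Hilbert $\A$-module, and let $P \in B(L^2(T,\A))$ denote the projection onto the non-negative modes. The shift $S$ from this section satisfies $SPS^{-1} - P$ is rank one, and a direct computation using (\ref{actL2}) shows $[P, C(f)] \in K(L^2(T,\A))$ for every $f \in \A \times_\alpha T$. Pulling the resulting Toeplitz extension
\begin{equation*}
0 \to K(L^2(T,\A)) \to \mathcal{E} \to \A \to 0
\end{equation*}
back along the canonical inclusion $\alpha^{-1}: \A \to M(\A \times_\alpha T)$ produces an element of $\mathrm{Ext}(\A, \A \times_\alpha T)$. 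Recalling the Pimsner--Voiculescu construction applied to the dual $\bbbz$-action $\hat\alpha$ on $\A \times_\alpha T$ and collapsing via Takai duality $(\A \times_\alpha T) \times_{\hat\alpha} \bbbz \cong K(L^2(T,\A))$, this is precisely the extension whose connecting homomorphism is $q_*$; carrying out this identification is the content of the first half of the proof.

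Second, I would rewrite $[\Dir]$ in the same picture. In the Fourier basis the bounded transform of $\Dir = -i\, d/dx$ is the diagonal operator with eigenvalues $n/\sqrt{1+n^2}$, which is a compact perturbation of the self-adjoint unitary $2P - 1$. A straight-line operator homotopy through compact perturbations then gives
\begin{equation*}
[\Dir] = \bigl[\bigl(\alpha^{-1},\, 2P-1,\, \A \times_\alpha T\bigr)\bigr] \in \KKi_1(\A, \A \times_\alpha T).
\end{equation*}
The Connes--Skandalis correspondence between odd Kasparov modules and Ext sends the right-hand class to exactly the extension $\mathcal{E}$ of the first step, and the Kasparov product with an odd $\KKi$-class is computed by the index map of its associated extension. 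Combining both paragraphs yields $\cdot \otimes_\A [\Dir] = q_*$.

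The main obstacle I expect is bookkeeping of sign conventions -- the author has already warned that these differ from \cite{bl} at some points. Concretely I must check that (i) the choice $\Dir = -i\, d/dx$ matches $2P-1$ rather than $1 - 2P$, and (ii) the shift $S$ is used as the positive generator of $\hat T = \bbbz$ under Takai duality, not its inverse. Modulo these normalisation checks, the identification reduces to a direct comparison of Busby invariants.
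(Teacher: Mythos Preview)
Your route is genuinely different from the paper's. The paper does not pass through a Toeplitz extension at all: it uses the very definition of $q_*$ as the composite of the Connes--Thom isomorphism with $p_*:\Ki_*(\A\times_\alpha\bbbr)\to\Ki_*(\A\times_\alpha T)$, invokes the Fack--Skandalis description of the Thom element as the unbounded operator $B(x)$ on the Hilbert $\A\times_\alpha\bbbr$-module $\A\times_\alpha\bbbr$, and then checks by a one-line Fourier computation (together with eq.~\ref{vecfie}) that $B(x)f=-\alpha(i\frac{d}{dx}\alpha^{-1}f)$, which is exactly $\Dir f$ after applying $p$. So $p_*[B(x)]=[\Dir]$ and the result follows immediately.

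Your second step --- replacing $\Dir(1+\Dir^2)^{-1/2}$ by $2P-1$ via a compact-perturbation homotopy and reading off the associated extension --- is correct and standard. The difficulty sits entirely in your first step. You assert that the Toeplitz extension attached to the dual $\bbbz$-action has connecting map $q_*$, but in this paper $q_*$ is \emph{defined} via Connes--Thom followed by $p_*$, not via Pimsner--Voiculescu. The statement ``PV boundary for $\hat\alpha$ $=$ Connes--Thom for $\alpha$ composed with $p_*$'' is exactly the nontrivial content of the proposition; invoking it is either circular or requires an external reference doing that comparison. The paper's approach sidesteps this by computing directly at the level of the $\KKi$-cycles. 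There is also a minor imprecision in your displayed extension: its ideal is $K(L^2(T,\A))$, which produces a class in $\mathrm{Ext}(\A,\A)$ rather than $\mathrm{Ext}(\A,\A\times_\alpha T)$. To land in the right group you must run the construction on the Hilbert $\A\times_\alpha T$-module $\A\times_\alpha T$ itself (as in \S\ref{indDirtheor}), where the relevant compacts are $\A\times_\alpha T$; the phrase ``pulling back along $\alpha^{-1}$'' does not repair this.
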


\begin{proof}
We begin by recalling the definition of the map $q_*$: There is a surjection $p:\A \times_{\alpha}\bbbr \to \A \times_{\alpha}T$, which will be given below in detail. Here the crossed product $\A \times_{\alpha}\bbbr$ is defined via the projection $\bbbr \to T$. Composing the Connes-Thom-isomorphism $\Ki_*(\A) \cong \Ki_{*+1}(\A \times_{\alpha}\bbbr)$ with the induced map $p_*$ gives $q_*$.  
    
The map $p$ is constructed using arguments from the proof of \cite[Prop. 10.4.2]{bl}: 

Represent $\A$ faithfully on a Hilbert space $H$. We use the faithful representation $\pi$ of $\A \times_{\alpha} T$ on $L^2(T) \ten H$ given for $h \in C(T,\A)$ by 
$$(\pi(h)f)(t)= \int_{T} \alpha_{-t}(h(r)) f(t-r)~dr \ .$$
A representation $\mu$ of $\A\times_{\alpha}\bbbr$ on the Hilbert $C(T)$-module $C(T) \ten L^2(T) \ten H$ is defined for $h \in C_c(\bbbr,\A)$ by
$$(\mu(h)f)(s,t)= \int_{\bbbr} \alpha_{-t}(h(r)) e^{2 \pi i r s}f(s,t-r)~dr \ .$$ 
Evaluation $\ev: C(T) \to \bbbc$ at the point $0$ induces a homomorphism $\ev_*:B(C(T) \ten L^2(T) \ten H) \to B(L^2(T) \ten H)$, since the Hilbert module tensor product $(C(T) \ten L^2(T) \ten H) \ten_{\ev} \bbbc$ is isomorphic to $L^2(T) \ten H$.
We see that $(\ev_* \circ \mu)(\A\times_{\alpha}\bbbr) = \pi(\A \times_{\alpha}T)$. The map $\ev_* \circ\mu$ defines the map $p$. Note that $p(f)=f \in \A \times_{\alpha}T$ for $f  \in C_0((0,1),\A)\subset \A \times_{\alpha}\bbbr$.

We also need a concrete description of the Connes-Thom-isomorphism. It is given by the Kasparov product with the Thom element in $\KKi_1(\A, \A \times_{\alpha}\bbbr)$, whose definition, due to Fack--Skandalis, we recall now \cite[\S 19.3]{bl}:

The Fourier transform $\Ft: L^2(\bbbr,\A) \to L^2(\bbbr,\A)$ is a unitary. 

Note that for $f_1 \in C_c(\bbbr),~f_2\in C_c(\bbbr,\A)$
$$f_1 \st f_2=\alpha(f_1 \star (\alpha^{-1} f_2)) \ ,$$
where $\star$ denotes the convolution product for the trivial action.

There is a homomorphism $B:C(\bbbr) \to M(\A \times_{\alpha} \bbbr)$ given for $f_1 \in C(\bbbr),~f_2 \in C_c(\bbbr,\A)$ by $B(f_1)f_2=\alpha(\Ft^{-1}(f_1 \Ft(\alpha^{-1}f_2)))$.
 
It extends to a map from unbounded continuous functions to unbounded regular operators on the Hilbert $\A \times_{\alpha}\bbbr$-module $\A \times_{\alpha}\bbbr$.

The Thom element is the class $[B(x)] \in \KKi_1(\A, \A \times_{\alpha}\bbbr)$. For $f \in \C_c(\bbbr,\A)$  
$$B(x)f=\alpha(\Ft^{-1}(x \Ft(\alpha^{-1}f)))=-\alpha(i\frac{d}{dx} \alpha^{-1}f) \ .$$
The assertion follows since for $f \in \C(T,\A)^{\st} \subset \A \times_{\alpha}T$ eq. \ref{vecfie} implies that $$\Dir f=-\alpha(i\frac{d}{dx} \alpha^{-1}f) \ .$$  
\end{proof}

We recall that $\Ki_*(K(L^2(T,\A))) \cong \Ki_*(\A)$ via Morita equivalence.

\begin{prop}
The map $$\iota_*: \Ki_*(\A \times_{\alpha} T) \to \Ki_*(\A)$$ is induced by the inclusion $$K(L^2(T,\A))^{\Alpha} \to K(L^2(T,\A)) \ .$$  
\end{prop}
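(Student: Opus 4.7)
The plan is to deduce the identification from the standard derivation of the dual Pimsner--Voiculescu sequence via Takai duality. Applying the ordinary Pimsner--Voiculescu theorem to the dual $\bbbz$-action $\hat\alpha$ on $B := \A \times_{\alpha} T$ produces a six-term exact sequence
$$\cdots \to \Ki_i(B) \xrightarrow{1-\hat\alpha_*} \Ki_i(B) \xrightarrow{j_*} \Ki_i(B \times_{\hat\alpha} \bbbz) \xrightarrow{\partial} \Ki_{i-1}(B) \to \cdots,$$
in which $j: B \to B \times_{\hat\alpha} \bbbz$ is the canonical inclusion. Takai duality supplies an isomorphism $B \times_{\hat\alpha} \bbbz \cong \A \otimes K(L^2(T))$, whose $\Ki$-theory is $\Ki_*(\A)$ by Morita equivalence; combining this isomorphism with the sequence above yields the dual PV sequence as displayed in the paper, so that $\iota_*$ is by construction $j_*$ followed by these identifications.

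It then suffices to verify that, under Takai duality, $j$ is carried to the inclusion $K(L^2(T,\A))^{\Alpha} \hookrightarrow K(L^2(T,\A))$. For this I would invoke the concrete realization from \S\ref{indseq}: the algebra $B$ is already represented on the Hilbert $\A$-module $L^2(T,\A)$ via the map $C$, with image exactly $K(L^2(T,\A))^{\Alpha}$, and the generator $\hat\alpha(1)$ of the dual action is implemented by conjugation with the multiplication operator $S$ recalled in the statement. Since the $S^n$ lie in the multiplier algebra of $K(L^2(T,\A))$ and implement $\hat\alpha$ covariantly on $C(B)$, the pair $(C,S)$ defines a $*$-homomorphism $B \times_{\hat\alpha}\bbbz \to B(L^2(T,\A))$. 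Takai duality (for the compact abelian group $T$) asserts that this homomorphism is injective with image exactly $K(L^2(T,\A)) \cong \A \otimes K(L^2(T))$. Under this identification the canonical map $j$ becomes the inclusion of the $\Alpha$-fixed points into $K(L^2(T,\A))$, and composing with the Morita equivalence completes the identification of $\iota_*$.

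The main obstacle is the bookkeeping required to check that the abstract Takai duality isomorphism matches precisely the concrete realization used throughout the paper, and in particular that no twist enters when identifying the canonical inclusion $j$ with the embedding of the $\Alpha$-invariants. This is, however, standard and can be read off from the explicit construction of Takai duality in a reference such as \cite[\S 10.1]{bl}.
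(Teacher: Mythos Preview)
Your proposal is correct and follows essentially the same route as the paper: both identify $\iota_*$ with the canonical inclusion $j:\A\times_{\alpha}T \to (\A\times_{\alpha}T)\times_{\hat\alpha}\bbbz$, and then use the explicit Takai duality isomorphism $f\delta_k \mapsto C(f)S^k$ to see that $j$ becomes the inclusion $K(L^2(T,\A))^{\Alpha}\hookrightarrow K(L^2(T,\A))$. The paper's proof is simply a terser version of your argument, stating the form of the Takai isomorphism directly rather than deriving it from the covariant pair $(C,S)$.
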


\begin{proof}
Per definition the map $\iota_*$ is induced from the inclusion $$\iota:\A \times_{\alpha} T \to (\A \times_{\alpha} T) \times_{\hat \alpha} \bbbz,~ f \mapsto f  \delta_0 \ ,$$
by using Takai duality $(\A \times_{\alpha} T) \times_{\hat \alpha} \bbbz \cong K(L^2(T,\A))$ and Morita equivalence. The Takai duality isomorphism assigns to $f \delta_k$ with $f \in \A \times_{\alpha} T$ the operator $C(f)S^k \in K(L^2(T,\A))$. This implies the assertion.
\end{proof}

\section{Saturated actions}
\label{satact}

In this section we specify to saturated actions. We will see that in the commutative case the pseudodifferential calculus defined in \ref{atsinseq} agrees with the calculus of vertical pseudodifferential operators on a principal bundle. Under this analogy $\A$ corresponds to the algebra of continuous functions on the total space and the fixed point algebra $\A^{\alpha}$ to the algebra of continuous functions on the base space. 

Recall from \cite[Cor. 7.1.5]{ph}:

\begin{ddd}
The action $\alpha$ is called saturated if the span $\Fi$ of the functions $a\alpha(b^*),~a,b \in \A$ is dense in $\A\times_{\alpha} G$. 
\end{ddd}

Since $C_{\alpha}(\Fi)$ consists of finite rank operators on $H(\A)$, we have that $C_{\alpha}(\A\times_{\alpha} G) \subset K(H(\A))$ if $\alpha$ is saturated. This shows part of the following proposition. 

\begin{prop}
\label{isocomp}
The action $\alpha$ is saturated if and only if $C_{\alpha}$ maps $\A\times_{\alpha} G$ isomorphically onto $K(H(\A))$.
\end{prop}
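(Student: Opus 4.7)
The plan is to reduce both directions to the observation, already made in the excerpt, that $C_{\alpha}(T_{a,b})=\Theta_{a,b}$ for $a,b\in\A$, where I write $T_{a,b}:=a\alpha(b^{*})\in\Fi$. Since $\A$ is dense in $H(\A)$ and $(v,w)\mapsto\Theta_{v,w}$ is norm-continuous, this immediately gives that $C_{\alpha}(\Fi)$ is dense in $K(H(\A))$, which is the only approximation fact needed.

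For the direction ``$C_{\alpha}$ an isomorphism onto $K(H(\A))$ $\Rightarrow$ $\alpha$ saturated'', applying the continuous inverse of $C_{\alpha}$ to the dense subset $C_{\alpha}(\Fi)\subset K(H(\A))$ immediately yields that $\Fi$ is dense in $\A\times_{\alpha}G$.

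For the converse, assume $\alpha$ is saturated. The paragraph preceding the proposition already shows $C_{\alpha}(\A\times_{\alpha}G)\subset K(H(\A))$; since any $*$-homomorphism of $C^{*}$-algebras has closed image, this image equals the closure of $C_{\alpha}(\Fi)$, hence all of $K(H(\A))$. The remaining and main point is injectivity of $C_{\alpha}$. For this I plan to first verify by a direct convolution computation that for $f\in C(G,\A)^{\st}$ one has $(f\st T_{c,d})(g)=C_{\alpha}(f)(c)\,\alpha_{g}(d^{*})$, so $f\st T_{c,d}=T_{C_{\alpha}(f)(c),d}\in\Fi$; and a similar direct calculation shows that $C(T_{x,y})=\Theta_{\U(\alpha)(x),\U(\alpha)(y)}$ as a rank-one operator on $L^{2}(G,\A)$. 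Combining these gives
$$C(f\st T_{c,d})\;=\;\Theta_{\U(\alpha)(C_{\alpha}(f)(c)),\,\U(\alpha)(d)}\qquad (f\in C(G,\A)^{\st},\ c,d\in\A).$$
Both sides of this identity are norm-continuous in $f$, so by density of $C(G,\A)^{\st}$ in $\A\times_{\alpha}G$ it extends to all $f\in\A\times_{\alpha}G$.

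Granted the extended identity, injectivity is immediate: if $C_{\alpha}(f)=0$ then $C_{\alpha}(f)(c)=0$ in $H(\A)$, so the right-hand side vanishes, and since $C$ is an isomorphism $f\st T_{c,d}=0$ for every $c,d\in\A$. Thus $f\st\Fi=0$, and by saturation $f\st(\A\times_{\alpha}G)=0$; taking $f^{*}$ on the right yields $\|f\|^{2}=\|f\st f^{*}\|=0$. The step I expect to require the most care is the extension by continuity of the displayed convolution identity from $C(G,\A)^{\st}$ to the whole crossed product—it is what bridges the explicit computation on $\Fi$ to an arbitrary element of the kernel of $C_{\alpha}$, and only with it in hand does the rest of the injectivity argument assemble from density of $\Fi$ and the $C^{*}$-identity.
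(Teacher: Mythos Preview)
Your argument is correct, and the continuity extension you single out as the delicate step does go through: $f\mapsto C_{\alpha}(f)(c)$ is norm-continuous $\A\times_{\alpha}G\to H(\A)$, while $\U(\alpha)$ is isometric and $v\mapsto\Theta_{v,\U(\alpha)(d)}$ is norm-continuous, so both sides of your displayed identity extend from $C(G,\A)^{\st}$ to the crossed product.

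The paper's proof of the forward direction rests on the same mechanism but packages it differently. It cites Phillips for the fact that $H(\A)$ carries an $\A\times_{\alpha}G$-valued inner product (essentially $\langle c,d\rangle_{\A\times_{\alpha}G}=T_{c,d}$) which is full when $\alpha$ is saturated, and for which the left $\A\times_{\alpha}G$-action is $C_{\alpha}$; the Hilbert-module compatibility $a\langle v,w\rangle_{\A\times_{\alpha}G}=\langle C_{\alpha}(a)v,w\rangle_{\A\times_{\alpha}G}$ then gives injectivity in one line. Your convolution identity $f\st T_{c,d}=T_{C_{\alpha}(f)(c),d}$ is precisely that compatibility relation, established by direct computation rather than by citation. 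So the two proofs share the same kernel; yours is self-contained and makes the bimodule identity explicit, at the cost of the extension-by-continuity step, while the paper trades that work for a reference to the imprimitivity bimodule from Phillips. For the converse, the paper simply says ``clear''; your short argument via $C_{\alpha}^{-1}$ applied to the dense set $C_{\alpha}(\Fi)$ makes this explicit.
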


\begin{proof}
Assume that $\alpha$ is saturated. 
It remains to show that $C_{\alpha}$ is injective. By (a slight modification due to different conventions of) \cite[Prop. 7.1.3]{ph} there is a $\A\times_{\alpha} G$-valued scalar product $\langle~,~\rangle_{\A\times_{\alpha} G}$ making the left $\A\times_{\alpha} G$-module $H(\A)$ a Hilbert $\A\times_{\alpha} G$-module. Since $\alpha$ is saturated, the scalar product is essential by \cite[Def. 7.1.4]{ph}. Let $0 \neq a \in \A \times_{\alpha} G$. Then there are $v,w \in H(\A)$ such that $$0 \neq a\langle v,w \rangle_{\A\times_{\alpha} G}=\langle C_{\alpha}(a)v,w\rangle_{\A\times_{\alpha} G} \ .$$ 
Hence $C_{\alpha}(a) \neq 0$.

The converse is clear. 
\end{proof}

\begin{prop}
\label{invcomp}
Assume that $\alpha$ is saturated. Then 
$$B(H^r(G,\A),H^s(G,\A))^{\Alpha} \to B(H^r(\A),H^s(\A)),~T \mapsto T_{\alpha}$$
is an isomorphism, which maps compact operators to compact operators.
 \end{prop}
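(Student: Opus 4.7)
My first step is to reduce the claim to the case $r=s=0$. The Laplacian $\Delta$ on $G$ acts only in the $G$-variable, while $\alpha$ acts pointwise on $\A$-values, so $(1+\Delta)^{t/2}$ is $\Alpha$-invariant for every $t\in\bbbr$ and implements an isometric isomorphism of the appropriate Hilbert $\A$-module Sobolev spaces. Conjugating by these operators yields an isomorphism $B(H^r(G,\A),H^s(G,\A))^{\Alpha} \cong B(L^2(G,\A))^{\Alpha}$ that preserves the compact ideal, and applying Prop.~\ref{isoop} to $(1+\Delta)^{t/2}$ does the same for the target, producing an isomorphism $B(H^r(\A),H^s(\A)) \cong B(H(\A))$ that likewise preserves compacts. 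Since $T\mapsto T_\alpha$ is multiplicative on $\Alpha$-invariant operators, these reductions commute with $T\mapsto T_\alpha$, so it suffices to treat the case $r=s=0$.

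At $r=s=0$, the compact-to-compact statement is essentially a reformulation of Prop.~\ref{isocomp}. Restricting $T\mapsto T_\alpha$ to $K(L^2(G,\A))^{\Alpha}$ and using the isomorphism $C:\A\times_{\alpha}G \cong K(L^2(G,\A))^{\Alpha}$, one recovers precisely the map $C_\alpha$, which under saturation is an isomorphism onto $K(H(\A))$.

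It remains to extend this to all of $B(L^2(G,\A))^{\Alpha}$. My plan is to identify both $B(L^2(G,\A))^{\Alpha}$ and $B(H(\A))$ with the multiplier algebra $M(K(L^2(G,\A))^{\Alpha})$ in a manner compatible with $T\mapsto T_\alpha$. On the $H(\A)$ side this is standard Hilbert-module theory combined with the compact case above: $B(H(\A)) = M(K(H(\A))) \cong M(K(L^2(G,\A))^{\Alpha})$. On the $L^2$ side the essential ingredient is that the regular representation of $\A\times_{\alpha}G$ on $L^2(G,\A)$ is non-degenerate, so that any invariant bounded operator is uniquely determined by its action as a multiplier on the invariant compact ideal, and conversely any multiplier $\phi$ extends to an invariant bounded operator via the strong limit $Tx := \lim \phi(e_i)x$ along an approximate unit $\{e_i\}\subset K(L^2(G,\A))^{\Alpha}$; the extension is $\Alpha$-invariant because each $\phi(e_i)$ already lies in the invariant ideal and therefore commutes with $\R^\alpha_g$. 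The main obstacle will be verifying that the composite identification $B(L^2(G,\A))^{\Alpha}\to B(H(\A))$ obtained in this way agrees with $T\mapsto T_\alpha$ itself; this will be checked on $\R^\alpha$-invariant vectors $v$, where $\phi(e_i)v = \phi(e_i)_\alpha v$ and $\{(e_i)_\alpha\}$ is an approximate unit for $K(H(\A))$ by the compact case, so that both prescriptions yield the same action on $H(\A)$.
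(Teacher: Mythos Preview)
Your reduction to $r=s=0$ and your treatment of the compact case coincide with the paper's, but from there the arguments diverge. The paper proves injectivity directly: if $T\neq 0$ is invariant then $T(1+\Delta)^{-1}\in K(L^2(G,\A))^{\Alpha}$ is nonzero, hence so is $T_\alpha(1+\Delta_\alpha)^{-1}$ by the compact case, and thus $T_\alpha\neq 0$. For surjectivity it takes $S\in B(H(\A))$, lifts each compact operator $Se^{-\frac 1n\Delta_\alpha}\in K(H(\A))$ to $\tilde S_n\in K(L^2(G,\A))^{\Alpha}$, and shows that $\tilde S_n$ converges strongly (using that $(\tilde S_n(1+\Delta)^{-1})_n$ is Cauchy and $\|\tilde S_n\|\le\|S\|$) to an invariant bounded operator whose image is $S$. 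Your route instead identifies both $B(L^2(G,\A))^{\Alpha}$ and $B(H(\A))$ with the multiplier algebra $M(\A\times_\alpha G)$, the first via non-degeneracy of the regular representation and the second via $B(H(\A))=M(K(H(\A)))$ together with the compact case; compatibility with $T\mapsto T_\alpha$ then follows from multiplicativity. Your argument is correct and more structural---it explains the isomorphism as a consequence of the single isomorphism on the compact ideal---while the paper's is more hands-on and avoids invoking the multiplier-algebra extension theorem for Hilbert-module representations.
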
 

\begin{proof}
The maps $(1+\Delta)^{s/2}:H^s(G,\A))^{\R^{\alpha}} \to L^2(G,\A)^{\R^{\alpha}}$ and $(1+\Delta_{\alpha})^{s/2}:H^s(\A) \to H(\A)$ are isomorphisms. Hence it is enough to consider the case $s=0$. The map $$K(L^2(G,\A))^{\Alpha} \to K(H(\A)),~T \mapsto T_{\alpha}$$
is an isomorphism by the previous proposition. Let $T \in B(L^2(G,\A))^{\Alpha}$. Since $(1+\Delta)^{-1}$ has dense range, $0 \neq T(1+\Delta)^{-1} \in K(L^2(G,\A))^{\Alpha}$. It follows that $T_{\alpha}(1+\Delta_{\alpha})^{-1} \neq 0$, thus $T_{\alpha} \neq 0$. This shows injectivity.

For surjectivity let $S \in B(H(\A))$ and let $\tilde S_n \in K(L^2(G,\A))^{\Alpha}$ be the preimage of $Se^{-\frac 1n\Delta} \in K(H(\A))$. Then $(\tilde S_n(1+\Delta)^{-1})_{n \in \bbbn}$ is a Cauchy sequence in $K(L^2(G,\A))$. Hence $\tilde S_n$ converges strongly on the range of $(1+\Delta)^{-1}$ to an operator $\tilde S$. Since the sequence $(\tilde S_n)_{n \in \bbbn}$ is uniformly bounded by $\|S\|$ and the range of $(1+\Delta)^{-1}$ is dense, the operator $\tilde S$ is bounded as well. Furthermore the map $T \mapsto T_{\alpha}$ is also continuous with respect to the strong operator topology on both sides. Hence the image of $\tilde S$ is $S$.
\end{proof}

We assume in the following that $\alpha$ is saturated and that $\A$ is separable.

By Morita equivalence $\Ki_*(\A^{\alpha})\cong \Ki_*(K(H(\A))$.
In view of the first proposition we recover that 
$$\Ki_*(\A^{\alpha}) \cong \Ki_*(\A\times_{\alpha} G) \ .$$ By \cite[Lemma 7.1.7]{ph} the map $$\phi:\A^{\alpha} \to \A\times_{\alpha} G,~a \mapsto a \in C(G,\A)^{\st}$$ induces the isomorphism.

The second proposition implies that multiplication of $a \in \Ai$ from the left induces a compact map from $H^r(\A)$ to $H^s(\A)$ for $r>s$ and that for $a \in \A$ and $k<0$ the operator $a Q_{\alpha}$ is compact on $H(\A)$. 

The Dirac operator $\Dir_E$ on $L^2(G,E \ten \A)$ defined in \S \ref{indDirtheor} induces a regular selfadjoint operator $\Dir_{\alpha}:=(\Dir_E)_{\alpha}$ on $H(\A) \ten E$ by Prop. \ref{isoop}. Since $a(\Dir_{\alpha}^2+1)^{-1} \in K(H(\A)\ten E)$ for any $a \in \A$ and $[\Dir_{\alpha},a] \in B(H(\A) \ten E)$ for any $a \in \Ai$, we obtain a class $[\Dir_{\alpha}] \in \KKi_{\dim G}(\A,\A^{\alpha})$. 

\begin{prop}
It holds that
$$\phi_*[\Dir_{\alpha}]=[\Dir_E]  \in \KKi_{\dim G}(\A,\A \times_{\alpha} G) \ .$$
\end{prop}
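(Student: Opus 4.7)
The plan is to exhibit an explicit unitary identification of the underlying Hilbert modules via the imprimitivity bimodule afforded by saturation, and then verify the operator identification using Kucerovsky's criterion for unbounded Kasparov products. First I would observe that the constant function $\mathbf{1}\in M(\A\times_{\alpha}G)$ is a self-adjoint projection (since $\mathbf{1}\st\mathbf{1}=\mathbf{1}$ by the unit-volume normalization), and that saturation is equivalent to $\mathbf{1}$ being full, because $(\A\times_{\alpha}G)\mathbf{1}(\A\times_{\alpha}G)$ has dense linear span $\Fi$. A direct computation identifies $\mathbf{1}(\A\times_{\alpha}G)\mathbf{1}$ with $\phi(\A^{\alpha})$, and the assignment $a\mapsto a$ (constant function) gives an isomorphism of $(\A\times_{\alpha}G,\A^{\alpha})$-bimodules $H(\A)\cong(\A\times_{\alpha}G)\mathbf{1}$: the adjoint of the constant function $a$ in $\A\times_{\alpha}G$ is the equivariant function $\alpha(a^*)$, and the convolution $\alpha(a_1^*)\cdot a_2=\Phi(a_1^*a_2)$ coincides with $\phi(\langle a_1,a_2\rangle_{\A^{\alpha}})$; the left $\A\times_{\alpha}G$-action matches via $C_{\alpha}(f)(a)=\int_G f(g)\alpha_g(a)\,dg=f\cdot a$ for constant $a$.

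Next I would define the unitary
$$U:(H(\A)\ten E)\otimes_{\phi}(\A\times_{\alpha}G) \;\longrightarrow\; (\A\times_{\alpha}G)\ten E,\qquad a\ten e\ten y\mapsto ay\ten e,$$
interpreting $a\in\A$ as a constant function in $\A\times_{\alpha}G$. Isometry follows because both sides of the inner product reduce to $y_1^*\Phi(a_1^*a_2)y_2\langle e_1,e_2\rangle_E$, and surjectivity follows because the image of $U$ contains $\A\cdot(\A\times_{\alpha}G)\supset\Fi$, which is dense by saturation. The intertwining of the $\A$-action is immediate: for $c\in\A$, both $(ca)y$ and $c\cdot(ay)$ compute to $\int_G ca\,\alpha_h(y(h^{-1}\cdot))\,dh$ in $\A\times_{\alpha}G$, since $\A\hookrightarrow M(\A\times_{\alpha}G)$ acts on $C(G,\A)$ by pointwise left multiplication.

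For the operator identification, I would invoke Kucerovsky's criterion for unbounded Kasparov products. Since $\phi_*[\Dir_{\alpha}]=[\Dir_{\alpha}]\otimes_{\A^{\alpha}}[\phi]$ and $[\phi]$ is represented by $(\phi,0,\A\times_{\alpha}G)$ with trivial operator, the conditions for $\Dir_E$ on $(\A\times_{\alpha}G)\ten E$ to represent this product reduce to self-adjoint regularity of $\Dir_E$ (Section \ref{indDirtheor}); boundedness of the commutators $[\Dir_E,T_{a\ten e}]$ for $a\in\Ai$, where $T_{a\ten e}(y)=ay\ten e$, which follows from the identity $[\Dir_E,a]=-\sum c(X_i)L_{X_i}(a)$; and Connes--Skandalis positivity of the symmetrized product of $\Dir_{\alpha}\ten 1$ with $U^*\Dir_EU$ on a dense core of smooth elements $a\ten e\ten y$ with $a\in\Ai$ and $y\in\C(G,\A)\cap(\A\times_{\alpha}G)$.

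The main obstacle is this last positivity verification: the operators $\Dir_E$ and $U(\Dir_{\alpha}\ten 1)U^*$ are not unitarily conjugate on the nose, their difference being a term of the form $\sum a(X_iy)\ten c(X_i)e$ arising from the product rule $X_iC(ay)=-C(L_{X_i}(a)y)+C(a)X_iC(y)$ for left-invariant differentiation of convolution products. The technical task is to show this difference is a ``horizontal'' first-order operator in the $\A\times_{\alpha}G$-factor whose anticommutator with $\Dir_{\alpha}\ten 1$ is bounded below, so that $\Dir_E$ still represents the Kasparov product; once this is established, $\phi_*[\Dir_{\alpha}]=[\Dir_E]$ in $\KKi_{\dim G}(\A,\A\times_{\alpha}G)$ follows at once from the definition of the pushforward.
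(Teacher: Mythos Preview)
Your unitary $U$ coincides with the paper's isomorphism: under the identification $\A\times_{\alpha}G\cong K(H(\A))$ via $C_{\alpha}$, the constant function $a\in\A_i$ corresponds to $\Theta_{a,e_{i+1}}$ (since $e_{i+1}\in\A^{\alpha}$ and $ae_{i+1}=a$), so $a\st y\leftrightarrow\Theta_{a,e_{i+1}}K$. Up to this point your plan matches the paper.

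The gap is your claim that $U$ does \emph{not} intertwine $\Dir_{\alpha}\ten 1$ with $\Dir_E$. In fact it does, and this is exactly what the paper verifies, making the appeal to Kucerovsky's criterion unnecessary. The point you are missing is that the action of $\Dir_E$ on $(\A\times_{\alpha}G)\ten E$ is defined in \S\ref{indDirtheor} as \emph{left composition}: $K\mapsto\Dir_E K$ on $K(L^2(G,E\ten\A))$, restricted to $K(L^2(G,\A))^{\Alpha}\ten E$. Transported to $K(H(\A))\ten E$ via $C_{\alpha}$, each $X_j$ acts as left composition by the $\A^{\alpha}$-linear operator $(X_j)_{\alpha}$ on $H(\A)$. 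Hence
\[
(X_j)_{\alpha}\bigl(\Theta_{v,e_{i+1}}K\bigr)=\bigl((X_j)_{\alpha}\Theta_{v,e_{i+1}}\bigr)K=\Theta_{(X_j)_{\alpha}v,\,e_{i+1}}K,
\]
the last equality because $(X_j)_{\alpha}(v\langle e_{i+1},z\rangle)=((X_j)_{\alpha}v)\langle e_{i+1},z\rangle$ by $\A^{\alpha}$-linearity. Summing over $j$ with the Clifford factors gives $\Dir_E\circ U=U\circ(\Dir_{\alpha}\ten 1)$ on the nose.

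Your ``product rule'' $X_iC(ay)=-C(L_{X_i}(a)y)+C(a)X_iC(y)$ is therefore incorrect in this context: since $C(a\st y)=C(a)C(y)$ and $X_i$ acts by left composition of operators, one simply has $X_iC(a\st y)=(X_iC(a))C(y)=C(\tilde X_ia)C(y)$ with no second term. The extra ``horizontal'' piece you worry about does not arise, and the proof reduces to the direct intertwining check above together with compatibility of the $\A$-action, which is immediate.
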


\begin{proof}
For $a \in \A_i^{\alpha}$ we have that $\phi(a)=\Theta_{a,e_{i+1}}$ 
under the identification $K(H(\A))\cong \A\times_{\alpha} G$. 
Recall that the $K(H(\A))$-valued scalar product on the Hilbert $K(H(\A))$-module $H(\A) \ten_{\phi} K(H(\A))$ is given by $\langle v \ten K_1,w \ten K_2\rangle_{\ten} = K_1^* \phi(\langle v,w \rangle)K_2$.
Furthermore if $v,w \in \A_i$, then $\phi(\langle v,w \rangle)=\Theta_{e_{i+1},v} \Theta_{w,e_{i+1}}$,
hence $$\langle v \ten K_1,w \ten K_2\rangle_{\ten}=K_1^*\Theta_{e_{i+1},v} \Theta_{w,e_{i+1}}K_2 \ .$$
This implies that the map $H(\A) \ten_{\phi} K(H(\A)) \to K(H(\A))$ that maps $v\ten K$ with $v \in \A_i$ to $\Theta_{v,e_{i+1}}K$ is an isometry. It is clear that the map is surjective. Hence it is an isomorphism.
The induced isomorphism $$(H(\A) \ten E)\ten_{\phi} K(H(\A)) \cong K(H(\A)) \ten E$$ interchanges $\Dir_{\alpha} \ten 1$ and $\Dir_E$ and is compatibel with the action of $\A$. This implies the assertion.
\end{proof}

For the class $[\Dir_{\alpha}] \in \KKi_{\dim G}(\A,\A^{\alpha})$ to be well-defined the condition on $\alpha$ to be saturated is not necessary: For example  it was indicated in \cite[\S 3]{ri} that the triple $(\A,\Dir_{\alpha},H(\A))$ is a spectral triple if the action is ergodic. However the relation to the construction in \S \ref{indDirtheor} is more involved in general.

Next we show that $\Tr_{\tau}$ from in \S \ref{spectrip} considered as a trace on $K(H(\A))$ behaves as one would expect from a trace on an algebra of compact operators. For notational simplicity we assume from now on that $E=\bbbc$.

\begin{prop} 
For $v,w \in \A_c \subset H(\A)$
$$\Tr_{\tau}(\Theta_{v,w})=\tau(\langle w,v\rangle_{H(\A)}) \ .$$
\end{prop}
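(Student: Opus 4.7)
The strategy is to pass through the isomorphism $C_{\alpha}: \A \times_{\alpha} G \to K(H(\A))$ and evaluate the trace $\Tr_{\tau}$ on an explicit integral kernel.

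First I would use the identity $\Theta_{v,w} = C_{\alpha}(v\alpha(w^*))$ established earlier (right after Prop.~\ref{isoop}) to view $\Theta_{v,w}$ as coming from the function $f \in C(G,\A)$ with $f(g) = v\alpha_g(w^*)$. Applying formula \eqref{intop} the corresponding operator $C(f) \in K(L^2(G,\A))^{\Alpha}$ has integral kernel
\[
k(g,h) = \alpha_g^{-1}\bigl(v\, \alpha_{gh^{-1}}(w^*)\bigr),
\]
so on the diagonal $k(g,g) = \alpha_g^{-1}(vw^*)$. Since the approximate unit $\{e_i\}$ lies in $\Ai^{\alpha}$, the elements $e_i$ are $\alpha$-invariant, and one checks that $v,w \in \A_c$ implies $v\alpha_g(w^*) \in \A_c$ with uniform support in some $\A_{i_0}$, so that $k \in C(G\times G,\A_c)$.

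Next I would invoke the construction of $\Tr_{\tau}$ from \S\ref{spectrip}: it is the extension of $\Tr \otimes \tau$ from $\Cg(E) \otimes \A_c$ (with $E=\bbbc$ here), and on an operator with continuous $\A_c$-valued integral kernel it is given by the expected formula
\[
\Tr_{\tau}(C(f)) \;=\; \int_G \tau\bigl(k(g,g)\bigr)\,dg \;=\; \int_G \tau\bigl(\alpha_g^{-1}(vw^*)\bigr)\,dg \;=\; \tau(vw^*),
\]
where the last equality uses invariance of $\tau$ under $\alpha$.

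For the right hand side of the statement, the $\A^{\alpha}$-valued scalar product gives $\langle w,v\rangle_{H(\A)} = \int_G \alpha_g(w^*v)\,dg$; applying $\tau$, moving it inside the integral (by continuity of $\tau$ on $\A_c$), and using invariance yields $\tau(\langle w,v\rangle_{H(\A)}) = \tau(w^*v)$. The two sides are then equal by the trace property $\tau(vw^*) = \tau(w^*v)$.

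The one point that requires a small verification, rather than mere manipulation, is the diagonal-integral formula for $\Tr_{\tau}$ on operators of the form $C(f)$: this follows by writing a generic kernel in $C(G\times G,\A_c)$ as a limit of finite sums $\sum_i k_i(g,h)\otimes a_i$ with $k_i \in C(G\times G)$ and $a_i \in \A_c$, on which $\Tr \otimes \tau$ is manifestly $\int_G k_i(g,g)\,dg \cdot \tau(a_i)$, and extending by the continuity of both $\tau$ and the canonical trace on kernel operators.
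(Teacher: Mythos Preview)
Your overall strategy---expressing $\Theta_{v,w}$ via the explicit kernel $k(g,h)=\alpha_{g^{-1}}(v)\,\alpha_{h^{-1}}(w^*)$ and evaluating the trace---is exactly the route the paper takes, but there is a real gap in the step you flag at the end. The claim that $\Tr_\tau$ is given on an operator with continuous $\A_c$-valued kernel by integrating $\tau(k(g,g))$ over the diagonal is not valid in the generality you invoke: even classically, an integral operator on $L^2(G)$ with kernel in $C(G\times G)$ is only Hilbert--Schmidt, not trace class, so your approximation argument fails because $\Tr$ is not continuous for the uniform norm on $C(G\times G)$ (it is not even everywhere defined there). Correspondingly, the construction in \S\ref{spectrip} only yields a continuous map $C(G\times G,\A_c)\to l^2(\M)$, not into $l^1(\M)$.

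The paper closes this gap by factorising $\Theta_{v,w}=\Theta_{v,e_i}\Theta_{e_i,w}$ with $e_i\in\A_c^{\alpha}$ chosen so that $e_ive_i=v$ and $e_iwe_i=w$. Each factor has kernel in $C(G\times G,\A_c)$ (namely $k_v(g,h)=\alpha_{g^{-1}}(v)$ and $k_w(g,h)=\alpha_{h^{-1}}(w^*)$) and hence lies in $l^2(\N)$; their product is then in $l^1(\N)$, and the trace is computed via the legitimate $l^2$--$l^2$ pairing
\[
\Tr_\tau(AB)=\tau\Bigl(\int_G\!\!\int_G k_A(g,h)\,k_B(h,g)\,dh\,dg\Bigr).
\]
Your kernel already splits as $k(g,h)=\alpha_{g^{-1}}(v)\cdot\alpha_{h^{-1}}(w^*)$, so this factorisation is implicit in your computation; what is missing is recognising that this product structure is precisely what justifies both membership in $l^1(\N)$ and the trace evaluation, rather than an appeal to a general diagonal formula that does not hold.
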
 

\begin{proof}
For $v,w \in \A_c$ there is $i \in \bbbn$ such that $e_ive_i=v,~e_iwe_i=w$. The constant functions $v,w \in C(G,\A)^{\st}\subset \A \times_{\alpha} G$ are in $l^2(\N)$, hence $\Theta_{v,e_i},\Theta_{w,e_i} \in l^2(\N)$. Thus $\Theta_{v,w}=\Theta_{v,e_i}\Theta_{e_i,w} \in l^1(\N)$
and $\Tr_{\tau}(\Theta_{v,w})=\Tr_{\tau}(\Theta_{v,e_i} \Theta_{e_i,w})$.

By eq. \ref{intop} the image of $\Theta_{v,e_i}$ under the map $K(H(\A)) \cong \A \times_{\alpha}G \to K(L^2(G,\A))$ is the integral operator with integral kernel $k_v(g,h):=\alpha_{g^{-1}}(v)$, and the image of $\Theta_{e_i,w}$ is $k_w(g,h)=\alpha_{g^{-1}}(\alpha_{gh^{-1}}(w^*))=\alpha_{h^{-1}}(w^*)$. This implies
\begin{align*}
\Tr_{\tau}(\Theta_{v,e_i} \Theta_{e_i,w}) &= \tau(\int_G\int_G k_v(g,h)k_w(h,g)~dhdg)\\
&=\tau(\int_G \int_G \alpha_{g^{-1}}(v)\alpha_{g^{-1}}(w^*)~dhdg)\\
&= \tau(\int_G \alpha_g(vw^*)~dg)\\
&=\tau(\langle w,v \rangle_{H(\A)}) \ .
\end{align*}
\end{proof}

Now we will establish the connection to the classical situation. Consider a compact principal bundle $G \to P \to B$ endowed with a vertical Riemannian metric coming from the Riemannian metric on $G$. The induced vertical $L^2$-scalar product is denoted by $\langle~,~\rangle^v$. For simplicity we assume that $P$ is a closed manifold. Let $\Delta_v$ be the vertical Laplace operator. For $s \in \bbbr$ we define the vertical Sobolev space $H_v^s(P)$ as the completion of $\C(P)$ with respect to the norm induced by the $C(B)$-valued scalar product 
$$\langle f_1,f_2\rangle_s^v(b):=\langle (\Delta_v+1)^sf_1, (\Delta_v+1)^sf_2 \rangle^v(b) \ .$$ Denote by $\Psi^s_v(P)$ the space of vertical classical pseudodifferential operators of order smaller or equal to $s$ with coefficients depending continuously on the parameter $b\in B$ and by $\ov{\Psi}^s_v(P)$ its completion with respect to the operator norm of bounded operators from $H_v^s(P)$ to $H_v^0(P)$. Furthermore we define $$\Psi_{\alpha}^s(G,C(P)):=\U(\alpha)\Psi^s(G,C(P))^{\Alpha}\U(\alpha)^{-1} \  .$$
By Prop. \ref{isoop} the elements of $\Psi_{\alpha}^s(G,C(P))$ are continuous operators form $H_{\alpha}^s(C(P))$ to $H_{\alpha}^0(C(P))$. Let $\ov{\Psi}_{\alpha}^s(G,C(P))$ be the completion with respect to the operator norm.

\begin{prop}
For all $s \in \bbbr$
$$H^s(C(P))=H_v^s(P)$$  and 
$$\ov{\Psi}^s_v(P) =\ov{\Psi}_{\alpha}^s(G,C(P)) \ .$$
\end{prop}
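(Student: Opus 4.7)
The plan is to transfer everything via $\U(\alpha)$ to the $\R^\alpha$-invariant subspace of $L^2(G,C(P))$, and then identify the transferred objects with their vertical counterparts on $P$.

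First I would identify $H(C(P))$ with $H_v^0(P)$ as Hilbert $C(B)$-modules. For the pullback action $\alpha_g(a)(p)=a(p\cdot g)$ of the right $G$-action, the $C(B)$-valued scalar product $\int_G \alpha_g(a^*b)\,dg$ on $H(C(P))$, evaluated at $p\in P$, becomes $\int_G (a^*b)(p\cdot g)\,dg$, which is exactly $\langle a,b\rangle^v(\pi(p))$. Hence the two scalar products coincide.

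The main step is to show that under $\U(\alpha)$, the scalar Laplacian $\Delta$ acting on the $G$-variable of $L^2(G,C(P))$ restricts to an operator on $L^2(G,C(P))^{\R^\alpha}$ which corresponds to the vertical Laplacian $\Delta_v$ on $H_v^0(P)$. I would use a bi-invariant Riemannian metric on $G$ (which exists since $G$ is compact) to define $\Delta$, so that $\Delta$ is $\R^\alpha$-invariant and preserves the invariant subspace. An invariant $f\in L^2(G,C(P))^{\R^\alpha}$ is determined by $f(e)\in C(P)$, and for $f=\alpha^{-1}(a)$ one has $f(e)=a$. A direct computation in a trivialization $P|_U\cong G\times U$ then shows $(\Delta f)(e)=\Delta_v a$: since $\alpha_{g^{-1}}(a)(h,u)=a(hg^{-1},u)$ and $g\mapsto hg^{-1}$ is an isometry of $G$ by bi-invariance, applying $\Delta$ in the $g$-variable at $g=e$ recovers $\Delta_v a(h,u)$. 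By functional calculus $(1+\Delta)^{s/2}$ on the invariant subspace corresponds to $(1+\Delta_v)^{s/2}$ on $H_v^0(P)$, yielding $H^s(C(P))=H_v^s(P)$ for all $s\in\bbbr$.

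For the pseudodifferential identification I would work locally. In a trivialization $P|_U\cong G\times U$, an $\Alpha$-invariant operator in $\ov\Psi^s(G,C(P))$ restricts to a classical pseudodifferential operator in the $G$-variable with coefficients continuous in $u\in U$, which is precisely the local form of a vertical pseudodifferential operator on $P|_U$. A partition of unity on $B$ (pulled back to $\alpha$-invariant functions on $P$) patches these local identifications into the global equality $\ov{\Psi}_v^s(P)=\ov{\Psi}_\alpha^s(G,C(P))$. The main technical obstacle is the Laplacian identification above, which hinges on bi-invariance and on the commutativity of $\Delta$ with the diagonal action $\R^\alpha$; once this is in place, the rest is standard local symbol calculus and glueing.
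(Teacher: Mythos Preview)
Your argument for the Sobolev identification is essentially the paper's, though the paper phrases it more directly: rather than passing to a local trivialization and invoking bi-invariance, it simply observes that for each $X\in\gl$ the induced operator $X_\alpha$ on $H(C(P))$ coincides with the vertical vector field $X_v$ on $\C(P)$, whence $(\Delta_v+1)f=(\Delta_\alpha+1)f$ immediately. Your local computation amounts to the same thing, just with extra scaffolding.

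For the pseudodifferential equality, your route genuinely diverges from the paper's. You propose to work in local trivializations $P|_U\cong G\times U$, identify the transferred invariant operators with vertical ones fibrewise, and patch with a partition of unity on $B$. The paper instead argues globally via the symbol exact sequences: it reduces to $s=0$ using $(\Delta_v+1)^s=(\Delta_\alpha+1)^s$, embeds both $\ov\Psi^0_v(P)$ and $\ov\Psi^0_\alpha(G,C(P))$ into $B(H(C(P)))$, observes that both contain $K(H(C(P)))$ as an ideal, and then shows their images in the Calkin quotient agree by checking that the generators $fX^i_v(1+\Delta_v)^{-1/2}$ and $fX^i_\alpha(1+\Delta_\alpha)^{-1/2}$ literally coincide as operators and have the same symbol $fx_i$; density of the algebra generated by $\{fx_i\}$ in $C(S^{\dim G-1}\times P)$ finishes it. The paper's approach is cleaner in that it never leaves the global setting and sidesteps the question of what exactly an $\Alpha$-invariant pseudodifferential operator looks like in a trivialization (a point you assert but do not justify). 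Your approach would work, but carrying it out honestly requires verifying that conjugation by $\U(\alpha)$ turns invariant elements of $\Psi^s(G,C(P|_U))$ into $U$-parametrized classical operators on $G$, and that the patching respects the closures --- both are true but not as automatic as your sketch suggests.
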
 

\begin{proof}
Let $X \in \gl$ and let $X_v$ be the induced invariant vertical vector field on $P$.

From $X_{\alpha}f=X_vf$ for $f \in \C(P)$ it follows that $$(\Delta_v+1)f=(\Delta_{\alpha}+1)f \ .$$
Now the first assertion follows from the definition. 

It is enough to show the second assertion for $s=0$ since $\ov{\Psi}^s_v(P)=\ov{\Psi}^0_v(P)(\Delta_v+1)^{s}$, and analogously for $\ov{\Psi}^s_{\alpha}(G,C(P))$. We denote by $K(L^2_v(P))$ the space of compact vertical operators depending continuously on the base point. It is not difficult to check that it is isomorphic to $K(H(C(P))$.

We write $S_vP\subset T_vP$ for the vertical sphere bundle. An orthonormal basis $(X_1, \dots, X_{\dim G})$ of $\gl$ defines a trivialisation of $TG$, and also of $T_vP$. Thus $S_vP\cong S^{\dim G-1} \times P$. We write $\sigma_v:\ov{\Psi}^0_v(P) \to C(S_vP)$ for the vertical symbol map.
 
There is the following commutative diagram:
$$\xymatrix{
0 \ar[r]& K(L^2_v(P)) \ar[r]\ar[d]^=& \ov{\Psi}^0_v(P) \ar[r]^{\sigma_v}\ar[d]& C(S^{\dim G-1} \times P) \ar[r]\ar[d]& 0  \\
0 \ar[r]& K(H(C(P))) \ar[r]& B(H(C(P))) \ar[r]& B(H(C(P)))/K(H(C(P))) \ar[r]& 0 \\
0 \ar[r]& K(H(C(P))) \ar[r]\ar[u]^=& \ov{\Psi}^0_{\alpha}(G,C(P)) \ar[r]^{\sigma}\ar[u]& C(S^{\dim G-1} \times P) \ar[r]\ar[u] & 0 }$$
The rows are exact. All vertical maps are injective. 

In the following we show that the images in $B(H(C(P)))/K(H(C(P)))$ of the two vertical maps in the last column agree. Then the images in $B(H(C(P)))$ of the vertical maps in the second row also agree. This implies the assertion.

Let $(X^i)_{i=1, \dots, \dim G}$ be an orthonormal basis of $\gl$ and let $x_i:\gl \cong \bbbr^{\dim G} \to \bbbr$ be the induced coordinate functions. We use the same notation for the restriction of $x_i$ to  $S^{\dim G-1}$. For $f \in C(P)$ the operator $fX^i_v (1+\Delta_v)^{-1/2}$ is in $\ov{\Psi}^0_v(P)$ and the operator $fX^i_{\alpha}(1+\Delta_{\alpha})^{-1/2}$ in $\ov{\Psi}^0_{\alpha}(G,C(P))$. Furthermore $$\sigma_v(fX^i_v (1+\Delta_v)^{-1/2})= f x_i = \sigma(fX^i_{\alpha}(1+\Delta_{\alpha})^{-1/2}) \ .$$ Since the algebra generated by the functions $fx_i$with $f \in C(P)$ and $i=1 \dots \dim G$ is dense in $C(S^{\dim G-1} \times P)$ and since the operators $fX^i_{\alpha}(1+\Delta_{\alpha})^{-1/2}$ and $fX^i_v (1+\Delta_v)^{-1/2}$ agree in $B(H(C(P))$, the assertion follows.
\end{proof}

If $P$ is endowed with a Riemannian metric and has unit volume, then an invariant trace $\tau$ on $\A=C(P)$ is given by $\tau(f)=\int_P f$. In this situation Prop. \ref{indtheo} corresponds to the zero degree part of the Atiyah-Singer index formula for families. 

In order to complete the picture, we establish a relation between principal and saturated action. This is not needed in the remainder of the paper.
The notion of a principal action was introduced in \cite{e} in the more general framework of Hopf $C^*$-algebras. Specialized to the present context its definition is:

\begin{ddd} Let $\alpha$ be an action of a compact group $G$ on a $C^*$-algebra $\A$. Then $\alpha$ is called principal if the span $\Fi$ of the functions $a\alpha(b^*),~a,b \in \A$ is dense in $C(G,\A)$. 
\end{ddd}

If $\alpha$ is principal, then $\alpha$ is saturated. The converse holds at least for compact Lie groups:

\begin{prop}
\label{satprinc}
Let $G$ is a compact Lie group and let $\alpha$ be an action of $G$ on a $C^*$-algebra $\A$.
If $\alpha$ is saturated, then $\alpha$ is principal.
\end{prop}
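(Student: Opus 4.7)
My plan is to bootstrap density of $\Fi$ from the $C^*$-norm on $\A\times_\alpha G$ (given by saturation) to the sup norm on $C(G,\A)$ by decomposing both completions under the right regular action $R$ of $G$ and working one $R$-isotypic component at a time, where the two norms turn out to be equivalent.

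First, $\Fi$ is $R$-invariant, since $R_h(a\alpha(b^*))=a\alpha((\alpha_h b)^*)\in\Fi$. On $\A\times_\alpha G$ the action $R_h$ coincides with right multiplication by the unitary $h^{-1}\in M(\A\times_\alpha G)$ coming from the canonical inclusion $G\hookrightarrow M(\A\times_\alpha G)$, and is therefore a $C^*$-isometry; it is also a sup-norm isometry of $C(G,\A)$. Consequently the isotypic projection
\[
P_\pi := d_\pi \int_G \overline{\chi_\pi(h)}\, R_h\,dh\qquad(\pi\in\hat G)
\]
is bounded in both topologies. Its algebraic image $M_\pi:=P_\pi(C(G,\A))$ is, by Peter--Weyl, the finitely generated free $\A$-module $V_\pi\otimes V_\pi^*\otimes\A$ spanned by the matrix coefficients of $\pi$, hence complete in the sup norm.

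Schur's test applied to the integral kernel $k(g,h)=\alpha_g^{-1}(f(gh^{-1}))$ yields $\|\cdot\|_{\A\times_\alpha G}\le\|\cdot\|_{L^1}\le\|\cdot\|_\infty$, so the $C^*$-norm is dominated by the sup norm. The next step is to promote this to an equivalence of norms on $M_\pi$: I would establish that $M_\pi$ is also $C^*$-closed inside $\A\times_\alpha G$ (equivalently, that $P_\pi(\A\times_\alpha G)=M_\pi$ as sets), whereupon the open mapping theorem applied to the two complete Banach norms on $M_\pi$ forces them to be equivalent. Saturation is then deployed componentwise: since $\Fi$ is $C^*$-dense in $\A\times_\alpha G$, its image $P_\pi(\Fi)$ is $C^*$-dense and hence sup-dense in $M_\pi$; by $R$-invariance of $\Fi$, $P_\pi(\Fi)$ already lies in the sup-norm closure $\ov\Fi$ of $\Fi$ in $C(G,\A)$, which therefore contains every $M_\pi$.

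To conclude, for $f\in C(G,\A)$ I would first sup-approximate by a smooth $f':=K_n*f$ using a smooth approximate identity $K_n\in C^\infty(G)$, and then invoke that the Peter--Weyl expansion $f'=\sum_\pi P_\pi f'$ converges absolutely in sup norm for $f'\in C^\infty(G,\A)$ (Sobolev-type decay $\|P_\pi f'\|_\infty=O((1+\lambda_\pi)^{-N})$ combined with Weyl's dimension bounds on $\hat G$ gives summability). A finite truncation lies in $\bigoplus_\pi M_\pi\subset\ov\Fi$, and passing first to the truncation limit and then letting $n\to\infty$ places $f$ in $\ov\Fi$. The principal obstacle is precisely the $C^*$-closedness of $M_\pi$ inside $\A\times_\alpha G$---equivalently, that the natural map $\A^{d_\pi^2}\to\A\times_\alpha G$ given by $(c_{ij})\mapsto\sum_{ij} c_{ij}\pi_{ij}$ is bounded below. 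For the trivial action this is transparent from $C^*(G)\cong\bigoplus_\pi M_{d_\pi}$; for a general $\alpha$ it should follow from an analogous $R$-equivariant Plancherel-type block decomposition of $\A\times_\alpha G$, but making this analytically rigorous is where the real work of the argument lies.
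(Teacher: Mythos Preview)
Your approach via the Peter--Weyl decomposition is natural, but the step you flag as the principal obstacle is not merely difficult --- it is false. The isotypic component $M_\pi$ is typically \emph{not} closed in $\A\times_\alpha G$, even for saturated $\alpha$, so the open mapping argument cannot be run.

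Take $G=T$ and $\A=C(T\times T)$ with $\alpha$ rotating the first coordinate; this action is free, hence principal and in particular saturated. For the trivial character, $M_0$ consists of the constant functions $g\mapsto a$ with $a\in\A$. Using the representation $C$ on $L^2(T,\A)$ one finds $C(a)=\Theta_{\alpha^{-1}(a),1}$, whence
\[
\|a\|_{\A\times_\alpha T}^2 \;=\; \bigl\|\langle \alpha^{-1}(a),\alpha^{-1}(a)\rangle_{L^2}\bigr\| \;=\; \|\Phi(a^*a)\| \;=\; \sup_{y}\int_T |a(x,y)|^2\,dx\,,
\]
where $\Phi$ is the averaging projection onto $\A^\alpha\cong C(T)$. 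Bump functions $a_k(x,y)=\phi_k(x)$ with $\|\phi_k\|_\infty=1$ and $\|\phi_k\|_{L^2}\to 0$ then satisfy $\|a_k\|_\infty=1$ while $\|a_k\|_{\A\times_\alpha T}\to 0$. So the inclusion $\A\hookrightarrow\A\times_\alpha T$ is not bounded below, $M_0$ is not $C^*$-closed, and $P_0(\A\times_\alpha T)$ is strictly larger than $M_0$: it is the completion $H(\A)$ of $\A$ in the Hilbert-module norm coming from $\Phi$. There is no Plancherel-type decomposition of $\A\times_\alpha G$ with blocks isomorphic to $\A^{d_\pi^2}$; the blocks one actually gets are Hilbert-module completions, and on those the sup norm is simply not available.

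The paper avoids this trap by never comparing the two norms on a fixed isotypic piece. It uses the heat semigroup as a two-sided mollifier: right convolution $f\mapsto f\st k_t(\cdot,e)=e^{-t\Delta}f$ approximates the identity in sup norm, while left convolution $f\mapsto k_t(\cdot,e)\st f$ extends to a bounded map $L^2_\alpha(G,\tilde\A)\to C(G,\tilde\A)$ and therefore upgrades convergence in $\A\times_\alpha G$ (which controls the $L^2_\alpha$-module action) to sup convergence. Both convolutions are checked to preserve $\Fi$. Given $f_n\in\Fi$ with $f_n\to f$ in $\A\times_\alpha G$, one first fixes $t$ small so that the doubly smoothed $f$ is sup-close to $f$, then picks $n$ so that the doubly smoothed $f_n$ is sup-close to the doubly smoothed $f$; the latter still lies in $\Fi$. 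The smoothing kernel absorbs exactly the loss of norm control that breaks your isotypic argument.
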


\begin{proof}
We denote by $\tilde{\A}$ the unitalization of $\A$ if $\A$ is nonunital, and we set $\tilde \A=\A$ for unital $\A$ in the following. The action $\alpha$ extends to $\tilde{\A}$.

We need some technical preliminaries:
 
We write $C_{\alpha}(G,\tilde \A)$ for the space $C(G,\tilde \A)$ endowed with the right $\A$-module structure 
$$C_{\alpha}(G, \tilde \A) \times \A \to C_{\alpha}(G,\tilde \A),~ (f,a) \mapsto f\alpha(a) \ .$$

The Hilbert $\tilde \A$-module $L^2_{\alpha}(G,\tilde \A)$ is defined as the completion of $C_{\alpha}(G,\tilde \A)$ with respect to the norm induced by the $\tilde \A$-valued scalar product 
$$\langle f_1,f_1 \rangle_{\alpha}=\int_G \alpha_g^{-1}(f_1(g)^*f_2(g))~dg \ .$$
There is an isometric isomorphism of Hilbert $\tilde \A$-modules $U:L^2(G,\tilde \A) \to L^2_{\alpha}(G,\tilde \A),~f\mapsto (g \mapsto \alpha_gf(g))$. 

Since for $f_1,f_2 \in C(G,\A)$ 
$$UC(f_1)U^{-1}f_2= f_1 \st f_2 \ ,$$
the map $f_1 \mapsto (f_2 \mapsto f_1 \st f_2))$ extends to a $C^*$-homomorphism $\A \times_{\alpha} G \to B(L^2_{\alpha}(G,\tilde \A))$.  

Let $\Delta$ be the Laplace operator on $G$. For $t>0$ 
$$Ue^{-t\Delta}U^{-1}:L^2_{\alpha}(G,\tilde \A) \to C(G,\tilde \A)$$ 
is continuous.
We denote by $k_t$ the integral kernel of $e^{-t\Delta}$.  
Recall that $e^{-t\Delta}$ converges strongly to the identity on $C(G,\A)$ for $t  \to 0$.

For $f \in C(G,\A)$
\begin{align*}
f \st k_t(\cdot,e)&=\int_G k_t(\cdot,h)f(h)~dh \\
&= e^{-t\Delta}f \ .
\end{align*}
We see that $f \st k_t(\cdot,e)$ converges to $f$ in $C(G,\A)$ for $t \to 0$.

If $f=a\alpha(b^*)$ for $a,b\in \A$, then $$a\int_G k_t(g,h)\alpha_h(b^*)~dh=a\alpha_g\int_G k_t(e,h) \alpha_h(b^*)~dh \ ,$$
thus $f \st k_t(\cdot,e) \in \Fi$. 

Furthermore for $f \in C(G,\A)$ 
\begin{align*}
(Ue^{-t\Delta}U^{-1}f)(g)&= U\int_G k_t(g,h) \alpha_{h^{-1}}f(h)~dh\\
&=\int_G k_t(gh^{-1},e)\alpha_{gh^{-1}}f(h)~dh\\
&= \int_G k_t(h,e) \alpha_h f(h^{-1}g)~dh\\
&= (k_t(\cdot,e) \st f)(g) \ .
\end{align*}

Since for $f=a\alpha(b^*)$ with $a,b \in \A$
$$(k_t(\cdot,e) \st f)(g)=\int_G k_t(h,e) \alpha_h(a) ~dh ~\alpha_g(b^*)  \ ,$$
it follows that $Ue^{-t\Delta}U^{-1}(\Fi) \subset \Fi$. 

It is clear that $Ue^{-t\Delta}U^{-1}f$ converges to $f$ in $C(G,\A)$ for $t \to 0$ for general $f \in C(G,\A)$.

We are ready for the main argument:

Let $f \in C(G,\A)$ and let $(f_n)_{n \in \bbbn}$ be a sequence in $\Fi$ converging in $\A \times_{\alpha} G$ to $f$. 

Let $\ve >0$. 

There is $t>0$ such that 
$$\|Ue^{-t\Delta}U^{-1}(f \st k_t(\cdot,e))- f\|_{\infty} < \ve/2\ ,$$ where $\|~\cdot ~\|_{\infty}$ denotes the supremum norm.

Since $k_t(\cdot,e) \in L^2_{\alpha}(G,\tilde \A)$, the functions $f_n \st k_t(\cdot,e)$ converge to $f \st k_t(\cdot,e)$ in $L^2_{\alpha}(G,\tilde \A)$ for $n \to \infty$. Hence there is $n \in \bbbn$ such that $$\|Ue^{-t\Delta}U^{-1}(f_n \st k_{t}(\cdot,e)) - Ue^{-t\Delta}U^{-1}(f \st k_{t}(\cdot,e))\|_{\infty} < \ve/2 \ ,$$ 
hence $$\|f-Ue^{-t\Delta}U^{-1}(f_n \st k_{t}(\cdot,e))\|_{\infty} < \ve \ .$$
From the first part of the proof it follows that $Ue^{-t\Delta}U^{-1}(f_n*k_{t}(\cdot,e)) \in \Fi$. 
\end{proof}

In order to determine whether a torus action is saturated or not the following criterium due to Rieffel \cite[Theorem 7.1.15.]{ph} is useful:

\begin{prop}
\label{largsat}
Let $G$ be a compact abelian group and let $\alpha$ be a $G$-action on $\A$. Let $\hat G$ be the dual group of $G$. For $\chi \in \hat G$ let $$\A(\chi)=\{ a \in \A~|~ \alpha_g(a)=\chi(g)a \mbox{ for all } g\in G\} \ .$$
Then $\alpha$ is saturated if and only if the linear span $\A(\chi)^*\A(\chi)$ of elements of the form $a^*b$ with $a,b \in \A(\chi)$ is dense in $\A^{\alpha}$ for any $\chi \in \hat G$. 
\end{prop}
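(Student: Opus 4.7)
The plan is to introduce, for each $\chi \in \hat G$, the element $U_\chi \in M(\A \times_\alpha G)$ given by the character function $g \mapsto \chi(g) \in C(G) \subset M(\A \times_\alpha G)$. A direct convolution computation, using the orthogonality of characters on $G$, shows that the family $\{U_\chi\}_{\chi \in \hat G}$ consists of mutually orthogonal self-adjoint projections. The core task is then to identify the corner $U_\chi \cdot (\A \times_\alpha G) \cdot U_\chi$ with $\A^\alpha$ and to describe $U_\chi \Fi U_\chi$ under this identification.

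First I would combine the Fourier decomposition $f(g) = \sum_\mu \hat f(\mu)\mu(g)$ on $G$ with the spectral decomposition $\A = \overline{\bigoplus_\eta \A(\eta)}$ induced by $\alpha$; character bookkeeping on both sides then yields, for any $f \in C(G,\A)$,
\[
(U_\chi \st f \st U_\chi)(g) = \chi(g)\,\Phi(\hat f(\chi)),
\]
where $\Phi : \A \to \A^\alpha$ is the mean of Section 3. This exhibits a $*$-isomorphism $U_\chi (\A \times_\alpha G) U_\chi \cong \A^\alpha$ via $c \leftrightarrow (g \mapsto \chi(g) c)$. Substituting $f = a\alpha(b^*) \in \Fi$ and collapsing the surviving indices gives $\Phi(\hat f(\chi)) = a_{\chi^{-1}}\,b_{\chi^{-1}}^*$ with $a_{\chi^{-1}} \in \A(\chi)^*$ and $b_{\chi^{-1}}^* \in \A(\chi)$; as $a,b$ vary over $\A$, these products sweep out all of $\A(\chi)^*\A(\chi)$. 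So under the isomorphism $U_\chi \Fi U_\chi$ corresponds to $\mathrm{span}(\A(\chi)^*\A(\chi))$, and the continuity of $x \mapsto U_\chi x U_\chi$ immediately gives the forward implication: density of $\Fi$ in $\A \times_\alpha G$ forces $\A(\chi)^*\A(\chi)$ dense in $\A^\alpha$ for every $\chi$.

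For the converse, I would first use the hypothesis to show that $\A \cdot \A(\chi)$ is dense in $\A$ for each $\chi$: by the spectral decomposition it suffices to take $a \in \A(\eta)$, and then approximating an approximate unit $(e_i)$ of $\A^\alpha$ by finite sums $\sum_j b_{ij}^* c_{ij}$ with $b_{ij}, c_{ij} \in \A(\chi)$ yields $ae_i \in \A \cdot \A(\chi)$ and $ae_i \to a$. Consequently each ``monomial'' function $g \mapsto a\chi(g)$ is a sup-norm limit of $\Fi$-elements of the form $\sum_j (ab_{ij}^*)\alpha(c_{ij})$ (which lie in $\Fi$ because $c_{ij}^* \in \A(\chi^{-1})$ so $\alpha(c_{ij}) = \alpha((c_{ij}^*)^*)$). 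Density of finite Fourier polynomials in $C(G,\A)$ in the sup norm (Stone--Weierstrass applied to $C(G)$), together with the continuity of $C(G,\A) \incl \A \times_\alpha G$, then delivers density of $\Fi$ in $\A \times_\alpha G$. The main technical hurdle is the two-sided convolution with $U_\chi$: tracking the Fourier exponent on $G$ and the spectral index on $\A$ simultaneously and seeing that both collapse in the corner to the single surviving term $\Phi(\hat f(\chi))$ is where the abelian hypothesis on $G$ (one-dimensional irreducibles, so characters simultaneously label $\hat G$ and the spectral subspaces of $\A$) is essential.
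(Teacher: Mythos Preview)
The paper does not prove this proposition at all: it is stated as a quotation of Rieffel's criterion, with a reference to \cite[Theorem 7.1.15]{ph}, and no argument is given. So there is nothing in the paper to compare your approach against.

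That said, your proposal is a correct direct proof. The identification of the corner $U_\chi(\A\times_\alpha G)U_\chi$ with $\A^\alpha$ via $c\leftrightarrow \chi(\cdot)\,c$ is a $*$-isomorphism (injective $*$-homomorphisms of $C^*$-algebras are isometric, and your convolution computation shows its range is exactly the corner), and your bookkeeping giving $\Phi(\widehat f(\chi))=a_{\chi^{-1}}b_{\chi^{-1}}^*\in\A(\chi)^*\A(\chi)$ for $f=a\alpha(b^*)$ is accurate; this handles the forward implication cleanly. For the converse, the one step worth making explicit is why an approximate unit $(e_i)$ of $\A^\alpha$ satisfies $ae_i\to a$ for $a\in\A(\eta)$: this is because $a^*a\in\A^\alpha$, so $\|ae_i-a\|^2=\|e_ia^*ae_i-e_ia^*a-a^*ae_i+a^*a\|\to 0$. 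With that in hand, your density chain (each monomial $a\chi$ approximable in sup norm by elements of $\Fi$, Fourier polynomials sup-dense in $C(G,\A)$, and the sup norm dominating the crossed-product norm) goes through. This is essentially the standard argument behind Rieffel's criterion, phrased via spectral projections in the multiplier algebra rather than via the imprimitivity bimodule language used in \cite{ph}.
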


This implies that $\A^{\alpha}$ is saturated if $\ov{\A(\chi)^*\A(\chi)}$ generates $\A^{\alpha}$ as a $\A^{\alpha}$-bimodule for any $\chi \in \hat G$.

\section{Examples of saturated actions by compact Lie groups}
\label{examples}

\subsection{Principal bundles}

Let $\A=C_0(X)$ where $X$ is a locally compact space. By \cite[Prop. 7.1.12]{ph} if the action $\alpha$ is saturated, then the action of $G$ on $X$ is free. Since $G$ is compact, the action is proper. Hence the fibration $X \mapsto X/G$ is a principal bundle. We refer to \cite{bhms} for a detailed survey on the relation between noncommutative notions and their commutative counterparts in this context.

\subsection{Crossed products by $\bbbz^n$-actions}

If $\beta$ is an action of $\bbbz^n$ on a $C^*$-algebra $\B$, then there is an induced action $\alpha$ of the dual group $T^n$ on the crossed product $\A=\B \times_{\beta}\bbbz^n$. It is well-known that this action is saturated.

\subsection{Noncommutative tori}

For notational simplicity we restrict to noncommutative two-tori in the following. 

The noncommutative two-torus $\A_{\theta}$ with angle $2 \pi \theta,~ \theta \in [0,1)$ is defined as the universal algebra generated by two unitaries $U_1,U_2$ subdued to the relation $U_1U_2= e^{2\pi i\theta}U_2U_1$. If $\theta \notin \bbbq$, then $\A_{\theta}$ is also called an irrational rotation algebra. 

There is an action of $T^2$ on $\A_{\theta}$ determined by $(z_1,z_2,U_i) \mapsto z_iU_i$. In \cite{e} it was verified that the action is principal. As an illustration we give a different argument based on Prop. \ref{largsat}, which works in similar form for other universal algebras as well, for example for graph $C^*$-algebras (see the references below).  

\begin{prop}
The action of $T^2$ on $\A_{\theta}$ is saturated.
\end{prop}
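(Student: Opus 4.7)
The plan is to apply Rieffel's criterion, namely Proposition \ref{largsat}, since the acting group $T^2$ is compact and abelian, and its dual is $\bbbz^2$. So the task reduces to checking that for each character $(n,m) \in \bbbz^2$, the linear span of $\A_{\theta}(n,m)^*\A_{\theta}(n,m)$ is dense in the fixed point algebra $\A_{\theta}^{\alpha}$.

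First I would identify the fixed point algebra. The action is defined on generators by $\alpha_{(z_1,z_2)}(U_i)=z_iU_i$, and since any element of $\A_{\theta}$ has a formal Fourier expansion $\sum a_{n,m} U_1^nU_2^m$ (polynomial combinations are dense), one sees that $\alpha_{(z_1,z_2)}$ acts as $\sum a_{n,m}U_1^nU_2^m \mapsto \sum z_1^nz_2^m a_{n,m}U_1^nU_2^m$. Taking the mean over $T^2$ projects onto the zero-th Fourier coefficient, so $\A_{\theta}^{\alpha}=\bbbc\cdot 1$.

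Next I would identify the relevant spectral subspaces. The monomial $U_1^nU_2^m$ is unitary (as a product of unitaries) and satisfies $\alpha_{(z_1,z_2)}(U_1^nU_2^m)=z_1^nz_2^m\,U_1^nU_2^m$, so it belongs to $\A_{\theta}(n,m)$. Consequently
$$(U_1^nU_2^m)^*(U_1^nU_2^m)=1\in \A_{\theta}(n,m)^*\A_{\theta}(n,m) \ .$$
Since $\bbbc\cdot 1$ is generated as a linear space by $1$, the span of $\A_{\theta}(n,m)^*\A_{\theta}(n,m)$ is all of $\A_{\theta}^{\alpha}$, for every $(n,m)\in\bbbz^2$. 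Proposition \ref{largsat} then yields saturatedness. There is no real obstacle here — the entire argument depends on the existence of an invariant unitary in each isotypical component, which is provided for free by the universal generators.
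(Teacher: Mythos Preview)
Your proof is correct and follows the same approach as the paper: apply Proposition~\ref{largsat} and observe that each spectral subspace $\A_{\theta}(n,m)$ contains the unitary $U_1^nU_2^m$, so $(U_1^nU_2^m)^*(U_1^nU_2^m)=1$ lies in $\A_{\theta}(n,m)^*\A_{\theta}(n,m)$. The only difference is that you explicitly compute $\A_{\theta}^{\alpha}=\bbbc\cdot 1$, whereas the paper leaves this implicit (relying on the remark after Proposition~\ref{largsat} that the span $\A(\chi)^*\A(\chi)$ is automatically an $\A^{\alpha}$-bimodule, hence is all of $\A^{\alpha}$ once it contains $1$).
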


\begin{proof}
For $(k,l) \in \bbbz^2$ we have that $U_1^kU_2^l \in \A_{\theta}(k,l)$. Since $(U_1^kU_2^l)^*(U_1^kU_2^l)=1$, the assertion follows from Prop. \ref{largsat}. 
\end{proof}

\subsection{Higher rank graph $C^*$-algebras}

Higher rank graph $C^*$-algebras were introduced in \cite{kp} as a generalization of graph $C^*$-algebras. The notions used here are as in \cite{prs}. See \cite{r} for a monograph on graph $C^*$-algebras, which also includes a discussion on the higher rank case. A $k$-graph $C^*$-algebra comes equipped with an $T^k$-action, called gauge action.

Higher rank graph $C^*$-algebras behave similar to Cuntz-Krieger algebras. In \cite[Lemma 4.1.1]{pr} it was shown that the action on row-finite Cuntz-Krieger algebras is saturated under certain conditions. The proof is based on Prop. \ref{largsat} and uses a particularly simple set of generators of the fixed point algebra. It carries over to row-finite graph $C^*$-algebras, where a similar description of the fixed point algebra exists, see \cite[Cor. 3.3]{r}. The proof can also be generalized to the higher rank case by using the description of the fixed point algebra from \cite[Lemma 3.3 (ii)]{kp}.
It turns out that the right condition in the case of $k$-graph $C^*$-algebras is that the $k$-graph have no sinks nor sources. 

Since we did not find the statement in the literature, we formulate it as a proposition here:

\begin{prop}
Let $E$ be a row-finite $k$-graph with no sources and no sinks and let $C^*(E)$ be its $k$-graph $C^*$-algebra. Then the gauge $T^k$-action on $C^*(E)$ is saturated. 
\end{prop}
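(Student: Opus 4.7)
The plan is to verify the criterion of Prop.~\ref{largsat}: for each character $\chi_n$ (indexed by $n \in \bbbz^k \cong \widehat{T^k}$) I need the linear span of $a^*b$ with $a,b \in \A(\chi_n)$ to be dense in the fixed point algebra $C^*(E)^{\alpha}$. I would first recall the standard description of $C^*(E)$: it is generated by partial isometries $\{s_\lambda\}_{\lambda \in E}$ indexed by paths of the $k$-graph, with the gauge action scaling $s_\lambda$ by $z^{d(\lambda)}$, where $d$ is the degree functor. Thus $s_\mu s_\nu^* \in \A(\chi_n)$ precisely when $d(\mu) - d(\nu) = n$, while $C^*(E)^{\alpha}$ is the closed span of monomials $s_\alpha s_\beta^*$ with $d(\alpha) = d(\beta)$ and $s(\alpha) = s(\beta)$ (see \cite[Lemma 3.3(ii)]{kp}). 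It is enough to approximate each such spanning monomial.

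Fix a generator $s_\alpha s_\beta^*$ with $d(\alpha) = d(\beta) = m \in \bbbn^k$ and $s(\alpha) = s(\beta) = v$, and decompose $n = n_+ - n_-$ with $n_\pm \in \bbbn^k$. The core idea is to first ``lengthen'' both legs via a Cuntz-Krieger factorization so that a degree-$n$ spectral element can bridge them. Concretely, the identity
$$s_v = \sum_{\lambda \in vE^{n_-}} s_\lambda s_\lambda^*$$
(a finite sum by row-finiteness, non-empty by the ``no sources'' hypothesis) rewrites
$$s_\alpha s_\beta^* = \sum_\lambda s_{\alpha\lambda}\, s_{\beta\lambda}^*,$$
each summand having legs of degree $m + n_-$ with common source $s(\lambda)$.

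For each occurring $\lambda$ the ``no sinks'' hypothesis supplies a path $\mu_\lambda$ of degree $m + n_+$ with $s(\mu_\lambda) = s(\lambda)$. Setting $a_\lambda := s_{\mu_\lambda} s_{\alpha\lambda}^*$ and $b_\lambda := s_{\mu_\lambda} s_{\beta\lambda}^*$ one has $d(\mu_\lambda) - d(\alpha\lambda) = n_+ - n_- = n$, so both lie in $\A(\chi_n)$. The identity $s_{\mu_\lambda}^* s_{\mu_\lambda} = s_{s(\mu_\lambda)} = s_{s(\lambda)}$ then telescopes to
$$a_\lambda^* b_\lambda = s_{\alpha\lambda}\, s_{s(\lambda)}\, s_{\beta\lambda}^* = s_{\alpha\lambda}\, s_{\beta\lambda}^*,$$
and summing over $\lambda$ exhibits $s_\alpha s_\beta^*$ itself as a finite element of $\A(\chi_n)^*\A(\chi_n)$. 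Prop.~\ref{largsat} then delivers the conclusion.

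The main obstacle is really one of bookkeeping: one must pin down the range/source conventions for $k$-graph morphisms used in \cite{kp}, \cite{prs} and \cite{pr} in order to see that precisely one of the two hypotheses (``no sources'' and ``no sinks'') supplies the non-emptiness of $vE^{n_-}$, while the other supplies the existence of the bridging path $\mu_\lambda$ of degree $m+n_+$ with prescribed source. Once these are aligned, the algebraic manipulation is a direct generalisation of the Cuntz-Krieger case treated in \cite[Lemma 4.1.1]{pr}, with the two multi-indices $n_+$ and $n_-$ replacing the single positive exponent used there.
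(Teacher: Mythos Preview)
Your proposal is correct and follows precisely the route the paper indicates: the paper does not give a detailed proof but only states that the argument of \cite[Lemma 4.1.1]{pr}, based on Prop.~\ref{largsat} and the description of the core from \cite[Lemma 3.3(ii)]{kp}, carries over to the $k$-graph setting with the no-sources/no-sinks hypotheses. Your write-up supplies exactly these details, including the decomposition $n=n_+-n_-$ needed to handle characters $n\in\bbbz^k$ that are not in $\bbbn^k$, and your caveat about range/source conventions is apt but does not affect the validity of the argument.
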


For locally finite graphs the previous proposition was proven in \cite{sz} by verifying the definition for principal actions.  

One may ask which geometric properties of the graph are reflected by $C^*$-algebraic properties of gauge action on $C^*(E)$. Using the dual Pimsner-Voiculescu sequence one can, under certain conditions, get back the length of loops in the graph. See \cite{pre}\cite{clm} for more detailed discussions of some aspects of this question. The action (up to equivariant isomorphism) does not determine the graph up to isomorphism: By the proof of \cite[Cor. 2.6]{r} there is an isomorphism between the $C^*$-algebra of a graph and the $C^*$-algebra of the dual graph that preserves the gauge action. However a graph need not be isomorphic to its dual graph.

\subsection{Compact quantum groups and quantum principal bundles}
We refer to \cite{md} for a survey on compact quantum groups, which were introduced by Woronowicz. We give a short introduction with the aim to show that the action of a subgroup on a compact quantum group is saturated. 

A comultiplication on a unital $C^*$-algebra $\A$ is a
unital *-homomorphism $\Phi:\A \to \A \ten \A$ satisfying coassociativity $(\Phi \ten 1)\Phi = (1 \ten \Phi)\Phi$. A compact quantum group is a pair $(\A,\Phi)$, where $\A$ is a unital $C^*$-algebra and $\Phi$ is a comultiplication on $\A$ such that $\Phi(\A)(\A \ten 1)$ and $(\A \ten 1)\Phi(\A)$ are dense in $\A \ten \A$. 
A compact quantum subgroup $(\B,\Phi_2)$ of a compact quantum group $(\A,\Phi)$ is a surjective $C^*$-homomorphism $\pi:\A \to \B$ compatible with the coproducts. 

A compact Lie group defines a compact quantum group $(C(G),\Phi_G)$ with $\Phi_G:C(G) \to C(G\times G),~(\Phi_Gf)(gh)=f(gh)$.

Let $G$ be a compact Lie group and a subgroup of a compact quantum group $(\A,\Phi)$. The surjective homomorphism $\pi:\A \to C(G)$ induces a homomorphism $\alpha=(\pi \ten 1)\circ\Phi:\A \to C(G,\A)$, hence an action of $G$ on $\A$.
The span of $(1\ten b)\Phi(a),~a,b \in \A$ is dense in $\A \ten \A$ and the range of $(\pi \ten 1):\A \ten \A \to C(G,\A)$ is dense. Thus the equality $(\pi \ten 1)(((1\ten b)\Phi(a))=b \alpha(a)$ implies that the span of $b\alpha(a),~a,b \in \A$ is dense in $C(G,\A)$, hence also in $\A \times_{\alpha} G$. This shows that $\alpha$ is saturated. 

Quantum principal bundles have been extensively studied in the literature in various settings. Our results apply to quantum principal bundles in the sense of \cite{e} whose structure quantum groups are compact Lie groups. Examples can be found in \cite{hms1}\cite{hms2} \cite{hs1} \cite{hs2} \cite{sz}, among others.

\textsc{Leibniz-Archiv\\
Waterloostr. 8\\
30169 Hannover\\
Germany} 

\textsc{Email: ac.wahl@web.de}

\end{document}